\newtheorem{theorem}{Theorem}
\newtheorem{lemma}[theorem]{Lemma}
\newcommand\TT{{\cal T}}
\newcommand\II{{\cal I}}
\newcommand{\udt}[2]{#1\left\llbracket #2\right\rrbracket}
\DeclareMathOperator{\rank}{rank}
\DeclareMathOperator{\cd}{cd}
\DeclareMathOperator{\dd}{dd}
\DeclareMathOperator{\cl}{cl}
\DeclareMathOperator{\td}{td}
\DeclareMathOperator{\csd}{c^{*}\hspace{-3pt}d}
\DeclareTextCompositeCommand{\v}{OT1}{l}{l\nobreak\hspace{-.1em}'}
\DeclarePairedDelimiter\set{\{}{\}}
\DeclarePairedDelimiter\card{\lvert}{\rvert}
\DeclarePairedDelimiter\paren{(}{)}
\begin{document}

\title{Closure property of contraction-depth of matroids\thanks{The second and third authors were supported by the MUNI Award in Science and Humanities (MUNI/I/1677/2018) of the Grant Agency of Masaryk University. The second author was also supported by the project GA24-11098S of the Czech Science Foundation.}}

\author{Marcin Bria\'nski\thanks{Theoretical Computer Science Department, Faculty of Mathematics and Computer Science, Jagiellonian University, Krak\'ow, Poland. E-mail: \texttt{marcin.brianski@doctoral.uj.edu.pl}.}\and
        Daniel Kr{\'a}l'\thanks{Institute of Mathematics, Leipzig University, Augustusplatz 10, 04109 Leipzig, and Max Planck Institute for Mathematics in the Sciences, Inselstra{\ss}e 22, 04103 Leipzig, Germany. E-mail: {\tt daniel.kral@uni-leipzig.de}. Previous affiliation: Faculty of Informatics, Masaryk University, Botanick\'a 68A, 602 00 Brno, Czech Republic.}\and
        Ander Lamaison\thanks{Extremal Combinatorics and Probability Group (ECOPRO), Institute for Basic Science (IBS), Daejeon, South Korea. This author was also supported by IBS-R029-C4. E-mail: {\tt ander@ibs.re.kr}. Previous affiliation: Faculty of Informatics, Masaryk University, Botanick\'a 68A, 602 00 Brno, Czech Republic.}}
\date{} 
\maketitle

\begin{abstract}
Contraction$^*$-depth is a matroid depth parameter analogous to tree-depth of graphs.
We establish the matroid analogue of the classical graph theory result asserting that
the tree-depth of a graph $G$ is the minimum height of a rooted forest whose closure contains $G$
by proving the following for every matroid $M$ (except the trivial case when $M$ consists of loops and coloops only):
the contraction$^*$-depth of $M$ plus one
is equal to the minimum contraction-depth of a matroid containing $M$ as a restriction.
\end{abstract}

\section{Introduction}
\label{sec:intro}

Tree-depth is one of fundamental graph width parameters and
appears in various contexts in combinatorics, e.g.~\cite{BriJMM+23,HatJMP+24,CzeNP21,NorSW23,DvoW22},
and algorithmic applications, e.g.~\cite{NedPSW23,CheCD+21,PACE20,IwaOO18,PilW18,KusR23}.
The parameter can be defined in several equivalent ways,
which makes it particularly robust, and
its importance has recently grown even further
because of its close connection to foundational results on sparsity~\cite[Chapters 6 and 7]{NesO12}.
We are interested in the matroid analogue of this parameter---the contraction$^*$-depth.
This matroid parameter was introduced in~\cite{KarKLM17} and
its importance rose because of its surprising connection
to preconditioners in combinatorial optimizations~\cite{BriKKPS22,BriKKPS24,ChaCKKP20,ChaCKKP22}.
One such result asserts that
the smallest dual tree-depth of a matrix row-equivalent to a given constraint matrix in an integer programming instance
is equal to the the contraction$^*$-depth of the vector matroid formed by the columns of the matrix~\cite{ChaCKKP20,ChaCKKP22}, and
a matrix with the optimal dual tree-depth can be constructed algorithmically in fixed-parameter time (when parameterized
by the contraction$^*$-depth and the entry complexity of the input matrix).

Tree-depth is a very versatile parameter as it has many interchangeable definitions.
The most common way is to introduce it via closures of spanning forests, also
known as \emph{elimination forests}. Seen through this lens, the tree-depth of a graph $G$, denoted by $\td(G)$,
is the minimum height of a rooted forest whose closure contains the graph $G$ as a subgraph;
see \Cref{subsec:depth} for formal definitions of these notions.
Alternatively, one can define tree-depth in a recursive fashion.
\begin{itemize}
\item If the graph $G$ has at most one vertex, then $\td(G) = \card{V(G)}$.
\item If $G$ is not connected, then $\td(G)$ is the maximum tree-depth of a component of $G$.
\item Otherwise $\td(G)=1+\min\limits_{v\in V(G)}\td(G\setminus v)$,
      i.e., $\td(G)$ is one plus the minimum tree-depth of $G\setminus v$
      where the minimum is taken over all vertices $v$ of $G$.
\end{itemize}
A third classical way to define tree-depth is by using the notion of \emph{centered colorings}.
A centered coloring of a graph $G$ is any coloring such that every
connected subgraph $H$ of $G$ contains a vertex of a unique color.
The tree-depth of $G$ is the minimum number of colors needed in any centered coloring of $G$.

In the case of the most well-known graph width parameter---tree-width---there
is a well-developed theory of the matroid analogue---branch-width.
The theory comes with many structural and
algorithmic results, see e.g.~\cite{GavKO12,Hli03a,Hli03b,Hli06,HliO07,HliO08,JeoKO18},
which parallels many of the developments in graph theory related to tree-width, including the famous Courcelle's Theorem~\cite{Cou90}.
An obvious barrier to defining the matroid notion corresponding to graph tree-depth,
is the absence of a matroid concept completely analogous to that of a vertex,
particularly in the general case of non-representable matroids.
This was overcome in~\cite{KarKLM17} by a definition presented in Subsection~\ref{subsec:depth},
which is applicable to general matroids and
in the specific case of represented matroids coincides with the intuitive
approach of lifting the inductive definition of graph tree-depth
where factoring by one-dimensional subspaces is substituted for vertex deletions.
Throughout this paper, we refer to this matroid depth parameter as contraction$^*$-depth
in line with the terminology used in~\cite{BriKKPS22,BriKKPS24}
to avoid confusion with a different notion of branch-depth defined in~\cite{DevKO20}
although this matroid depth parameter was referred to as branch-depth in~\cite{KarKLM17}.
The main result of this paper is a characterization of contraction$^*$-depth that
corresponds to the definition of tree-depth based on closures of rooted forests.

The definition of tree-depth of a graph $G$ using the closure of rooted forests
can be viewed as first augmenting the graph $G$ with additional edges and
then taking a shallow DFS spanning forest (also known as a Tr\'{e}maux forest)
of the augmented graph.
Here, some intuition can be gained from represented matroids
where the notions of contraction$^*$-depth and a recursively defined parameter contraction-depth,
which was introduced in~\cite{DevKO20} (the formal definition is given in Subsection~\ref{subsec:depth}),
can be shown to intertwine as follows:
every represented matroid $M$,
excluding matroids formed by loops and coloops only,
can be augmented by additional elements in such a way that
the contraction-depth of the augmented (represented) matroid is one plus the contraction$^*$-depth of $M$.
Hence, the contraction$^*$-depth of a represented matroid $M$ can be understood as
the smallest contraction-depth of a representable extension of $M$ minus one.
Our main result asserts that the same statement is true for all matroids:
it is possible to add elements in a way that the added elements
represent the ``hidden spanning forest'' along which an optimal
sequence of contractions can be performed.
We now state the main result formally;
we use $\csd(M)$ to denote the contraction$^*$-depth of a matroid $M$ and
$\cd(M)$ to denote the contraction-depth of $M$.

\begin{theorem}
\label{thm:main}
Let $M$ be a matroid.
The minimum contraction-depth of a matroid $M'$ that contains $M$ as a restriction
is equal to the contraction$^*$-depth of $M$ increased by one,
i.e.,
\[\csd(M)=\min_{M'\sqsupseteq M}\cd(M')-1\]
unless every element of $M$ is either a loop or a coloop and does not consist
solely of loops (when this happens, then $\csd(M)=\cd(M) \in \set{0, 1}$).
\end{theorem}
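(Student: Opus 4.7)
The plan is to establish the two directions of the asserted equality separately.

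\emph{Upper bound on $\csd(M)$.} For the direction $\csd(M) \le \cd(M') - 1$ valid for every $M' \sqsupseteq M$, I would combine two auxiliary facts. First, $\csd$ is monotone under restriction: any decomposition witnessing $\csd(M')$ restricts to a decomposition of $M$ of no greater depth by intersecting every ground set attached to the decomposition with $E(M)$. Second, for any matroid $N$ that is not exclusively a direct sum of loops and coloops, $\csd(N) \le \cd(N) - 1$; this should follow by induction on $\cd(N)$ using its recursive definition, since an element realising $\cd(N) = 1 + \min_e \cd(N/e)$ supplies a ``direction'' that a $\csd$-decomposition can factor out without paying an extra level. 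Combining these two facts yields $\csd(M) \le \csd(M') \le \cd(M') - 1$.

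\emph{Existence of the desired extension.} For the other direction I would construct, by induction on $|E(M)|$, an extension $M' \sqsupseteq M$ with $\cd(M') \le \csd(M) + 1$. The base cases (empty $M$, and $M$ consisting only of loops and coloops) are handled directly; the stated exception appears here. In the inductive step, fix an optimal $\csd$-decomposition of $M$ witnessing $\csd(M) = d$. At the top level it splits $M$ into pieces $M_1,\dots,M_k$ on disjoint subsets $E_1,\dots,E_k$ of $E(M)$ with $\csd(M_i) < d$ each. The inductive hypothesis supplies extensions $M_i' \sqsupseteq M_i$ with $\cd(M_i') \le d$. The aim is to glue these together with a single new ``pivot'' element $e_0$ into a matroid $M'$ on $E(M) \cup \bigcup_i (E(M_i')\setminus E_i) \cup \{e_0\}$ so that $M'|E(M) = M$ and $M'/e_0$ is the direct sum of the $M_i'$. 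Contracting $e_0$ first then gives $\cd(M') \le 1 + \max_i \cd(M_i') \le d + 1$, as required.

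\emph{The main obstacle.} The substantive difficulty lies in actually producing this single-element extension abstractly, rather than in the represented setting where one could simply pick a vector spanning the chosen top-level quotient direction. Here I would invoke Crapo's correspondence between single-element extensions of a matroid and modular cuts of its lattice of flats: the sought extension $M'$ is determined by specifying a modular cut, which intuitively consists of the flats of the partially assembled matroid that should contain $e_0$ in their closure---morally those flats that ``cross'' the top-level partition $E_1,\dots,E_k$. The modularity axiom ought to be forced by the properties that make the $\csd$-decomposition optimal, in particular by the behaviour of the connectivity function of $M$ on that partition. Carrying this out is likely to require strengthening the inductive hypothesis so as to track not only $\cd(M_i')$ but also the position of the elements added during the induction inside the lattice of flats, so that the extensions built at different levels remain mutually compatible. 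Verifying this compatibility---and in particular ensuring that the successive pivot elements do not inadvertently merge parts of $M$ that the $\csd$-decomposition keeps separate---is the step I expect to be the most delicate part of the argument.
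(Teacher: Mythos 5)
The first half of your plan (the inequality $\csd(M)\le\cd(M')-1$) is sound and matches the paper's Theorem~\ref{thm:upper}: one needs to verify that $\csd$ is monotone under restriction (in fact under taking minors, which the paper cites from~\cite{KarKLM17}) and that $\csd(N)\le\cd(N)-1$ for $N$ not consisting solely of loops and coloops; the only subtlety you elide is ensuring the inductive hypothesis applies to $N/e$, which the paper handles via Lemma~\ref{lm:bridge}.

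The second half, however, has a genuine gap, and it is exactly where all the work lies. Your inductive framework already has a chicken-and-egg problem before you reach Crapo's modular cuts: you assert that an optimal contraction$^*$-decomposition ``at the top level splits $M$ into pieces $M_1,\dots,M_k$ with $\csd(M_i)<d$,'' but this is false in general. If $M$ is connected, the root of the decomposition tree has a single child and the tree only branches after a path of some length $p\ge 1$; the restrictions $M|T(v_i)$ at the branches need not have smaller contraction$^*$-depth, and $M$ does not disconnect until one contracts elements corresponding to that path --- precisely the elements you have not yet added. You then plan to glue the inductively-built $M_i'$ with a single pivot $e_0$ so that $M'/e_0=\bigoplus_i M_i'$, but when $p>1$ a single contraction cannot disconnect the matroid, and the modular cut that would realize this (or a chain of $p$ single-element extensions) is not identified; you acknowledge that checking the modular-cut axiom and the compatibility of extensions across levels is ``the most delicate part'' and leave it entirely open. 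The paper avoids the induction altogether: given a decomposition $\TT=(T,f)$ it constructs, in one shot, a matroid $M^\TT$ on $M\cup E(T)$ (adding $\rank(M)$ new elements, one per tree edge) whose independent sets are the ``$\TT$-tamed'' sets defined via a token-assignment and token-distribution procedure (Subsection~\ref{subsec:tamed}); it then verifies the exchange axiom (Theorem~\ref{thm:matroid}), that $M^\TT|M=M$ (Lemma~\ref{lm:restriction}), and that $\cd(M^\TT)$ is at most the height of $T$ (Theorem~\ref{thm:depth}). This explicit, global construction is the content your proposal is missing; as written it does not constitute a proof.
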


\noindent We remark that one plus the contraction$^*$-depth of a matroid $M$
is a lower bound on the contraction-depth of $M$ and so
on the contraction-depth of any matroid that contains $M$ (see Theorem~\ref{thm:upper}), and
so Theorem~\ref{thm:main} asserts that this lower bound is attained for some extension of $M$.

We now briefly describe how the paper is structured and survey the main idea of the proof of Theorem~\ref{thm:main}.
In Section~\ref{sec:prelim} we review basic definitions from matroid theory and
depth parameters with a focus on matroid depth parameters.
In Section~\ref{sec:upper} we establish that one plus the contraction$^*$-depth is
a lower bound on the contraction-depth of a matroid, and in Section~\ref{sec:aux}
we prove auxiliary results on matroids needed for the proof of Theorem~\ref{thm:main},
which is given in Section~\ref{sec:main}.
In the proof of Theorem~\ref{thm:main}
we consider an optimal contraction$^*$-decomposition $(T,f)$ of a given matroid $M$,
which we leverage to augment the matroid $M$ by additional elements that
can be used in the recursive procedure described in the definition of contraction-depth.

When deriving the strategy of the proof of Theorem~\ref{thm:main},
it was particularly instructive to understand the case of represented matroids.
It turns out that if $M$ is a represented matroid,
then it is possible to label edges of $T$ with vectors in the linear span of $M$ so that
all elements assigned to some leaf $\ell$ of $T$ lie in the linear span of the edges on the path from $\ell$ to the root.
Inspired by this observation, we introduce additional elements that are associated with the edges of $T$ and
that are most generic in the sense that
they are independent of other elements unless the structure of the contraction$^*$-decomposition forces otherwise.
This idea is captured in the definition of a tamed set given in Subsection~\ref{subsec:tamed}.
In Subsection~\ref{subsec:anal} tamed sets are shown to form a matroid that contains
the original matroid as a restriction.
Finally,
it is shown that the matroid formed by tamed sets has a small contraction-depth in Subsection~\ref{subsec:decomp},
which finishes the proof of Theorem~\ref{thm:main}.

We conclude with a brief discussion of the role of exceptional matroids in the statement of Theorem~\ref{thm:main}
in Section~\ref{sec:concl}.

\section{Preliminaries}
\label{sec:prelim}

In this section we introduce notation used throughout the paper.
Among less standard notation that we use,
we mention that $[k]$ denotes the set of the first $k$ positive integers and
$\overline{X}$ is the complement of the set $X$ (it will always be obvious from the context what
the host set is).
For a general introduction to matroid theory and particularly for more detailed exposition of the notions introduced next,
we refer the reader to the monograph by Oxley~\cite{Oxl11}.

\subsection{Basic definitions from matroid theory}

A \emph{matroid} $M$ is a pair~$(E,\II)$,
where~$\II$ is a non-empty hereditary
(meaning that $\II$ is closed under taking subsets)
collection of subsets of~$E$ that satisfies the \emph{augmentation axiom},
i.e., if $X\in\II$, $X'\in\II$ and $\lvert X\rvert<\lvert X'\rvert$,
then for some $x \in X' \setminus X$ we have $X \cup \set{x} \in \II$.
The set $E$ is the \emph{ground set} of the matroid $M$ and
the sets that belong to~$\II$ are referred to as \emph{independent}.
All matroids considered in this paper have a non-empty ground set and are finite
although our arguments extend to matroids with finite rank.

We often think of a matroid $M$ as its ground set with an additional structure,
so we refer to the elements of the ground set as elements of the matroid $M$,
we write $e\in M$ if $e$ is an element of (the ground set of) the matroid $M$, and
we write $\lvert M\rvert$ for the number of elements of $M$.
The simplest example of a matroid is the \emph{free matroid}, that
is, a matroid such that every subset of its elements is independent.
Another example is the class of vector matroids:
a \emph{vector matroid} is a matroid whose ground set is formed by vectors and
independent sets are precisely sets of linearly independent vectors.

Fix a matroid $M$ for the rest of this subsection.
The \emph{rank} of a set $X$ of elements of $M$,
which is denoted by $\rank_M(X)$, or simply by $\rank(X)$ if $M$ is clear from the context,
is the maximum size of an independent subset of~$X$ (it can be shown using the augmentation axiom that
all maximal independent subsets of $X$ have the same cardinality).
The rank function of any matroid $M$ is \emph{submodular}, meaning that for any $X, Y \subseteq M$
we have
\[
    \rank_M\paren*{X \cup Y} + \rank_M\paren*{X \cap Y} \leq \rank_M(X) + \rank_M(Y) \text{.}
\]
The \emph{rank} of the matroid $M$, which is denoted by $\rank(M)$, is the rank of its ground set.
In the case of vector matroids,
the rank of $X$ is exactly the dimension of the linear space spanned by $X$.

An element $x$ of the matroid $M$ is a \emph{loop} if~$\rank(\set{x})=0$,
an element $x$ is a \emph{coloop}, also called a \emph{bridge}, if~$\rank(M\setminus\set{x})=\rank(M)-1$,  and
two elements $x$ and $x'$ are \emph{parallel} if~$\rank(\set{x})=\rank(\set{x'})=\rank(\set{x,x'})=1$.
A \emph{basis} of the matroid $M$ is an inclusion-wise maximal independent set, and
a \emph{circuit} is an inclusion-wise minimal subset that is not independent.
The \emph{dual matroid}, which is denoted by $M^*$, is the matroid with the same ground set such that
a set $X$ is independent in $M^*$ if and only if $\rank_M(\overline{X})=\rank(M)$
where $\overline{X}$ is the complement of the set $X$ (with respect to the ground set).
In particular, it holds that $\rank_{M^*}(X)=\rank_M(\overline{X})+\card{X} - \rank(M)$ for every set $X$.
We remark that $(M^*)^*=M$ for every matroid $M$.

The \emph{restriction} of the matroid $M$ to a subset $X$ of its elements
is the matroid with the ground set $X$ such that
a set $X'\subseteq X$ is independent in the restriction if and only if $X'$ is independent in $M$;
the restriction of $M$ to $X$ is denoted by $M|X$.
If a matroid $M'$ is a restriction of $M$, we write $M'\sqsubseteq M$.
The matroid obtained from $M$ by \emph{deleting} a set $X$ of the elements of $M$
is the matroid $M|\overline{X}$ and is denoted by $M\setminus X$.
The matroid obtained from $M$ by \emph{contracting} a set $X$, which is denoted by $M/X$,
is the matroid with the ground set $\overline{X}$ such that
a set $X'\subseteq\overline{X}$ is independent in $M/X$ if and only if
$\rank_M(X'\cup X)=\lvert X'\rvert+\rank_M(X)$.
Note that for any subset $X' \subseteq M / X$ we have
\[\rank_{M / X}(X') = \rank_{M}(X' \cup X) - \rank_{M}(X)\text{.}\]
For an element $e$ of the matroid $M$
we write $M\setminus e$ and $M/e$ instead of $M\setminus\{e\}$ and $M/\{e\}$ respectively.
We remark that $(M/e)^*=M^*\setminus e$ and $(M\setminus e)^*=M^*/e$ for every element $e$ of $M$.

We say that the matroid $M$ is \emph{connected} if every two distinct elements of~$M$ are contained in a common circuit;
the property of being contained in a common circuit is transitive~\cite[Proposition 4.1.2]{Oxl11},
that is, if the pair of elements $e$ and $e'$ is contained in a common circuit and
the pair $e'$ and $e''$ is also contained in a common circuit,
then the pair $e$ and $e''$ is also contained in a common circuit.
A \emph{component} of $M$ is an inclusion-wise maximal connected restriction of $M$;
a component is \emph{trivial} if it consists of a single loop, and it is \emph{non-trivial} otherwise.
If $M_1,\ldots,M_k$ are the components of $M$ and $X_i$ is a subset of elements of $M_i$ for each $i\in [k]$,
then the following holds:
\[\rank_M\left(X_1\cup\cdots\cup X_k\right)=\sum_{i\in [k]}\rank_{M_i}(X_i).\]
The converse of this statement is also true, in the sense that
if $X_1, \dots, X_k$ is a partition of the set of elements of $M$ such that
\[\sum_{i \in [k]} \rank(X_i)=\rank(M)\]
then each $X_i$ is a union of components of $M$, see~\cite[Proposition 4.2.1]{Oxl11}.

\subsection{Depth parameters}
\label{subsec:depth}

In this subsection we define matroid depth parameters studied in this paper.
To do so, we need to fix some notation related to \emph{rooted trees},
i.e., trees with a single distinguished vertex referred to as the \emph{root}.
Let $T$ be a rooted tree.
An \emph{ancestor} of a vertex $v$
is any vertex on the path from $v$ to the root (exclusive $v$ itself), and
a \emph{descendant} of $v$ is any vertex $v'$ such that $v$ is an ancestor of $v'$.
The unique neighbor of a non-root vertex $v$ that is its ancestor is the \emph{parent} of $v$ and
any neighbor of $v$ that is its descendant is a \emph{child} of $v$.
A \emph{leaf} is a vertex that has no children;
in particular, the root is a leaf if and only if $\card{V(T)} = 1$.
A \emph{branching vertex} is a vertex that has at least two children and
an \emph{internal branching vertex} is a branching vertex that is not the root.
Note that every internal branching vertex has degree at least three.

With a rooted tree $T$ we can associate a partial order $\preccurlyeq$ on $V(T)$
where $u \preccurlyeq v$ if and only if $u = v$ or $u$ is a descendant of $v$.
Observe that 
the set of elements greater than $v$ in the order $\preccurlyeq$,
i.e., the set of ancestors of $v$,
is linearly ordered by $\preccurlyeq$.
If $e=uv$ is an edge of $T$ and $u\preccurlyeq v$,
then $u$ is the \emph{bottom} vertex of the edge $e$, and
$v$ is its \emph{top} vertex.
For a subset $A \subseteq V(T)$
the \emph{upwards closure} of $A$
is the subtree of $T$ formed by vertices of $A$ and all vertices greater than a vertex of $A$ in $\preccurlyeq$.
Equivalently, the upwards closure of $A$
is the inclusion-wise smallest subtree of $T$ containing the root and all vertices of $A$.
The upwards closure of $A$ is denoted by $T\langle A \rangle$.
If $v$ is a vertex of $T$,
then $T[v]$ is the subtree of $T$ formed by $v$ and all its descendants.
Note that $T[v]$ itself can be viewed as a rooted tree with $v$ being its root.
Finally, if $v$ is a vertex of $T$,
then $\udt{T}{v}$ is the rooted subtree of $T$ formed by $T[v]$ and the path from $v$ to the root (the root of $\udt{T}{v}$ is the root of $T$).
Note that $\udt{T}{v} = T\langle T[v] \rangle$.

The \emph{height} of a rooted tree is the maximum number of vertices on a path from the root to a leaf, and
its \emph{depth} is the maximum number of edges on a path from the root to a leaf.
Note that the height and the depth of a rooted tree always differ by one.
Throughout the paper, we sometimes work with the height of a rooted tree and sometimes with the depth;
in this way, we avoid cumbersome expressions that would otherwise require adding or subtracting one.
A \emph{rooted forest} is a graph such that each component is a rooted tree, and
the \emph{height} of a rooted forest $F$ is the maximum height of a component of $F$.
The \emph{closure~$\cl(F)$} of a rooted forest $F$
is the graph obtained by adding edges from each vertex to all its descendants, and
the \emph{tree-depth~$\td(G)$} of a graph~$G$ is the minimum height of a rooted forest $F$ such that
$\cl(F)$ contains~$G$ as a subgraph.

The \emph{contraction-depth} of a matroid $M$, denoted by $\cd(M)$, is defined recursively as follows~\cite{DevKO20}:
\begin{itemize}
\item If $M$ consists of a single element, then $\cd(M) = 1$.
\item If $M$ is not connected, then $\cd(M)$ is the maximum contraction-depth of a component of $M$.
\item Otherwise $\cd(M)=1+\min\limits_{e\in M}\cd(M/e)$,
      i.e., $\cd(M)$ is one plus the minimum contraction-depth of $M/e$ where the minimum is taken over all elements $e$ of $M$.
\end{itemize}      
Similarly,
the \emph{deletion-depth} of a matroid $M$, denoted by $\dd(M)$, is defined recursively as follows~\cite{DevKO20}:
\begin{itemize}
\item If $M$ consists of a single element, then $\dd(M) = 1$.
\item If $M$ is not connected, then $\dd(M)$ is the maximum deletion-depth of a component of $M$.
\item Otherwise $\dd(M)=1+\min\limits_{e\in M}\dd(M\setminus e)$,
      i.e., $\dd(M)$ is one plus the minimum deletion-depth of $M\setminus e$ where the minimum is taken over all elements $e$ of $M$.
\end{itemize}      
It can be shown that the contraction-depth and deletion-depth of a matroid are dual notions,
that is, $\cd(M)=\dd(M^*)$ for every matroid $M$.

We now define the notion of the contraction$^*$-depth of a matroid,
which was introduced under the name of branch-depth in~\cite{KarKLM17}.
Since there is a competing notion of branch-depth~\cite{DevKO20},
we follow the terminology used in~\cite{BriKKPS24}, also see the conference version~\cite{BriKKPS22}.
A \emph{contraction$^*$-decomposition} of a matroid $M$ is a pair $\TT = (T, f)$
where $T$ is a rooted tree with $\rank(M)$ edges, and
$f$ is a mapping from the elements of $M$ to the leaves of $T$ such that
the following holds for any subset $X$ of elements of $M$:
\[|E(T \langle f(X) \rangle)| \geq \rank(X)\text{,}\]
that is, the number of edges of the upwards closure of the set of leaves to which the elements of $X$ are mapped by $f$
is at least the rank of $X$.
See Figure~\ref{fig:decomp} for an example.
If $(T,f)$ is a contraction$^*$-decomposition and $v$ is a vertex of $T$,
then $T(v)$ denotes the set of all elements of $M$ mapped to the leaves of the subtree $T[v]$ rooted at $v$.

\begin{figure}
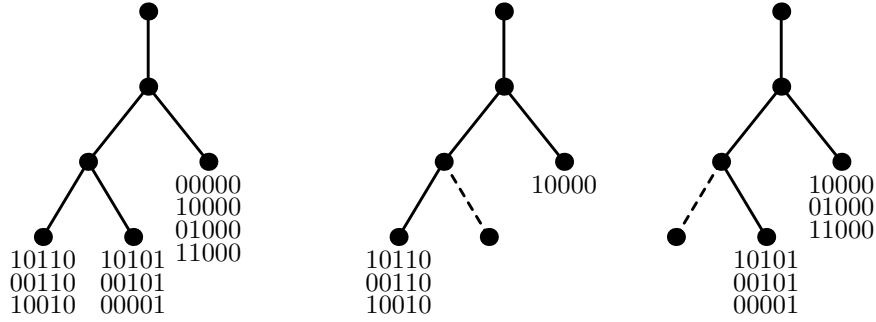

\begin{center}
\epsfbox{cdepth-3.mps}
\hskip 8ex
\epsfbox{cdepth-31.mps}
\hskip 4ex
\epsfbox{cdepth-32.mps}
\end{center}
\caption{An example of a contraction$^*$-decomposition $(T,f)$ of a matroid $M$ (in the left);
         the matroid $M$ is a $10$-element vector matroid of rank five represented over the binary field.
	 The vectors assigned by the function $f$ to each of the three leaves are listed as row vectors by the leaves.
	 The subtrees $T\langle f(X)\rangle$ for $X=\{10000, 10110, 00110, 10010\}$ and 
	 for $X=\{10000, 01000, 11000, 10101, 00101, 00001\}$
	 are depicted in the middle and in the right.}
\label{fig:decomp}
\end{figure}

The \emph{depth} of a contraction$^*$-decomposition $(T,f)$ is the depth of the tree $T$, and
the \emph{contraction$^*$-depth} of a matroid $M$
is the minimum depth of a contraction$^*$-decomposition of $M$.
It can be shown that contraction$^*$-depth of matroids
is non-increasing with respect to deleting and contracting elements
(see Proposition 3.3 in~\cite{KarKLM17} for a proof),
while contraction-depth is non-increasing only with respect to contracting elements
and might increase when deleting elements.

There is an alternative definition of the contraction$^*$-depth for representable matroids,
which can be viewed as a generalization of the contraction-depth.
In particular,
the contraction$^*$-depth of a vector matroid $M$ can be equivalently defined as follows~\cite{ChaCKKP20,ChaCKKP22}:
\begin{itemize}
\item If $M$ consists of a single element, then $\csd(M)=\rank(M)$.
\item If $M$ is not connected, then $\csd(M)$ is the maximum contraction$^*$-depth of a component of $M$.
\item Otherwise $\csd(M) = 1 + \min\limits_{L} \csd(M / L)$
      where $L$ is any one-dimensional subspace of the linear span $K$ of $M$ and
      $M/L$ is the vector matroid formed by the vectors of $M$ projected to the quotient space $K/L$ (see~\cite{Hal93}
      for the definition of a quotient space).
\end{itemize}
So, contraction$^*$-depth generalizes contraction-depth for vector matroids, in the sense that
the contraction-depth of a vector matroid $M$ decreased by one would coincide
with its contraction$^*$-depth had the definition of the latter required that the subspace $L$ is spanned by an element of $M$ (with
the exception of the exceptional matroids described in Theorem~\ref{thm:main}).

Contraction-depth and contraction$^*$-depth are functionally equivalent;
the following holds for every matroid $M$~\cite{KarKLM17}:
\[\csd(M)\le\cd(M)\leq 4^{\csd(M)}+1\text{.}\]
Note that
when $M$ is a vector matroid,
the first inequality follows easily from the alternative definition of
contraction$^*$-depth presented above.
For completeness, we provide a proof of the first inequality in Section~\ref{sec:upper}
where we also show that it is actually tight only for non-zero rank matroids that contain loops and coloops only.

\section{Upper bound on contraction$^*$-depth}
\label{sec:upper}

In this section we prove that $\csd(M)\le\cd(M)-1$ for a matroid $M$
unless $M$ consists of loops and coloops only.
To do so, we need the following lemma.

\begin{lemma}
\label{lm:bridge}
Let $M$ be a matroid and $X$ an arbitrary set of elements of $M$.
If an element $e\in M\setminus X$ is not a coloop of $M$,
then $e$ is also not a coloop in $M/X$.
\end{lemma}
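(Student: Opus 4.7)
My plan is to work directly with the rank function, using the characterization that an element $e$ of a matroid $N$ fails to be a coloop precisely when $\rank_N(N\setminus\{e\})=\rank(N)$ (equivalently, $e$ lies in the closure of $N\setminus\{e\}$). This characterization is a direct consequence of the definition $\rank(N\setminus\{e\})=\rank(N)-1$ for a coloop.

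First, I would restate the hypothesis as the rank equality $\rank_M(M\setminus\{e\})=\rank(M)$. Then I would apply the contraction rank formula $\rank_{M/X}(Y)=\rank_M(Y\cup X)-\rank_M(X)$ recalled in the preliminaries to the two sets that govern whether $e$ is a coloop of $M/X$. Taking $Y=(M/X)\setminus\{e\}=M\setminus(X\cup\{e\})$ yields $\rank_{M/X}((M/X)\setminus\{e\})=\rank_M(M\setminus\{e\})-\rank_M(X)$, while taking $Y$ to be the full ground set $M\setminus X$ of $M/X$ yields $\rank(M/X)=\rank(M)-\rank_M(X)$. Substituting the hypothesis $\rank_M(M\setminus\{e\})=\rank(M)$ makes the two right-hand sides agree, so $\rank_{M/X}((M/X)\setminus\{e\})=\rank(M/X)$, which is exactly the statement that $e$ is not a coloop of $M/X$.

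There is essentially no obstacle in this argument: the assumption $e\in M\setminus X$ ensures $e$ is still an element of $M/X$, and each rank identity is an immediate application of the contraction formula. It is worth noting that a more combinatorial route, for instance starting from a circuit of $M$ through $e$ and trying to project it into $M/X$, is noticeably more delicate, since such a circuit may meet $X$ and its image $C\setminus X$ need not remain a circuit after contraction; the rank-based argument above sidesteps this issue entirely.
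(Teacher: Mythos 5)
Your proof is correct, and it takes a genuinely different route from the paper's. The paper argues by duality: $e$ is a coloop of $M$ if and only if $e$ is a loop of $M^*$; since $e$ is not a loop of $M^*$, it is not a loop of $M^*\setminus X=(M/X)^*$, hence not a coloop of $M/X$. You instead stay entirely on the primal side, characterizing ``not a coloop'' as $\rank_M(M\setminus\{e\})=\rank(M)$ and then applying the contraction rank identity $\rank_{M/X}(Y)=\rank_M(Y\cup X)-\rank_M(X)$ to $Y=M\setminus(X\cup\{e\})$ and $Y=M\setminus X$; since $e\notin X$, the first choice gives $\rank_M(M\setminus\{e\})-\rank_M(X)$ and the second gives $\rank(M)-\rank_M(X)$, and the hypothesis makes these agree. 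The paper's proof is shorter but requires the dual identities $(M/X)^*=M^*\setminus X$ and the loop/coloop duality; yours is slightly longer but more elementary, using only the rank formula for contraction that is already stated in the preliminaries, and it makes the role of the assumption $e\notin X$ explicit (it is what lets $Y\cup X$ collapse to $M\setminus\{e\}$). Both are clean; your side remark about the difficulty of the circuit-projection approach is apt, since a circuit through $e$ may lose its circuit property after contraction.
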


\begin{proof}
Recall that an element $e$ is a coloop in $M$ if and only if $e$ a loop in the dual matroid $M^*$.
Since the element $e$ is not a loop in $M^*$, $e$ is also not a loop in $M^*\setminus X$ and so in $(M/X)^*$.
It follows that $e$ is not a coloop in $M/X$.
\end{proof}

We are now ready to prove the upper bound on contraction$^*$-depth in terms of contra\-ction-depth.

\begin{theorem}
\label{thm:upper}
Let $M$ be a matroid.
Then $\csd(M)\le\cd(M)-1$
unless every element of $M$ is either a loop or a coloop and $M$ does not consist solely of loops.
\end{theorem}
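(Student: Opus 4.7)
The plan is to prove the theorem by induction on $\card{M}$. The base case $\card{M} = 1$ is immediate: a single loop satisfies $\csd(M) = 0 = \cd(M) - 1$ and is not exceptional, whereas a single coloop satisfies $\csd(M) = \cd(M) = 1$ and is precisely the exceptional case. For the inductive step I would first dispose of disconnected $M$ by establishing that $\csd(M) = \max_i \csd(M_i)$ over the components $M_i$: the inequality $\ge$ follows from the fact recalled in the excerpt that $\csd$ is non-increasing under deletion, and the inequality $\le$ by identifying the roots of optimal decompositions of the individual components into one combined tree of matching rank. Each component is then either a single loop, a single coloop, or a connected matroid with at least two elements; if all components fall in the first two categories, $M$ consists of loops and coloops only and the bound is verified by direct computation (the exceptional case corresponding precisely to the presence of a coloop), while if some component $M_c$ is of the third type, the inductive hypothesis applied to $M_c$ (which is not exceptional, having no loops or coloops) combined with $\cd(M_c) \ge 2$ and the observation that the remaining components contribute at most $1$ to $\csd$ gives the desired inequality.

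The heart of the argument is the case when $M$ is connected with $\card{M} \ge 2$. Any loop or coloop of $M$ would fail to share a circuit with any other element, contradicting connectedness, so $M$ has no loops and no coloops. Choose $e^* \in M$ realizing $\cd(M) = 1 + \cd(M/e^*)$. Since no element $f \ne e^*$ is a coloop of $M$, \Cref{lm:bridge} implies that no element is a coloop of $M/e^*$, and hence $M/e^*$ is not exceptional. By the inductive hypothesis $\csd(M/e^*) \le \cd(M/e^*) - 1$, and combining this with the auxiliary inequality $\csd(M) \le \csd(M/e^*) + 1$ produces the required bound $\csd(M) \le \cd(M) - 1$.

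For the auxiliary inequality, I would take an optimal contraction$^*$-decomposition $(T', f')$ of $M/e^*$, noting that $T'$ has $\rank(M) - 1$ edges since $e^*$ is not a loop, and form $T$ by adjoining a new root $r^*$ above the root of $T'$ through a single new edge; the resulting tree has $\rank(M)$ edges and depth $\csd(M/e^*) + 1$. Setting $f$ to agree with $f'$ on $M \setminus \{e^*\}$ and mapping $e^*$ to any leaf of $T'$ (which remains a leaf of $T$), for any non-empty $X \subseteq M$ the upward closure $T\langle f(X)\rangle$ differs from $T'\langle f(X)\rangle$ only by the added vertex $r^*$ and the new edge above the old root, so $|E(T\langle f(X)\rangle)| = |E(T'\langle f(X)\rangle)| + 1$. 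Since $f(X) \supseteq f'(X \setminus \{e^*\})$, the decomposition property of $(T', f')$ yields $|E(T'\langle f(X)\rangle)| \ge \rank_{M/e^*}(X \setminus \{e^*\}) = \rank_M(X \cup \{e^*\}) - 1$, so the required inequality $|E(T\langle f(X)\rangle)| \ge \rank_M(X)$ follows. The main obstacle in the overall argument is ensuring that the induction delivers the strict inequality $\csd(M/e^*) \le \cd(M/e^*) - 1$ rather than merely $\csd(M/e^*) \le \cd(M/e^*)$, and it is precisely at this point that \Cref{lm:bridge} is invoked to preclude $M/e^*$ from being exceptional.
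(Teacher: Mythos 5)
Your proposal is correct and follows essentially the same strategy as the paper: induction on the number of elements, the disconnected case handled by combining optimal decompositions of the components, and the connected case using \Cref{lm:bridge} to guarantee that $M/e^*$ has no coloop and hence is non-exceptional, so the inductive hypothesis applies, followed by the same tree construction that attaches a new root above an optimal contraction$^*$-decomposition of $M/e^*$. The paper organizes its base cases by $\rank(M)\in\{0,1\}$ rather than by $\card{M}=1$, which is cosmetic; the one place your write-up is slightly loose is the disconnected case, where the inductive hypothesis should be invoked for \emph{every} component containing a non-loop, non-coloop element rather than a single $M_c$, since several such components may compete for $\max_i\csd(M_i)$, although the conclusion is unaffected.
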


\begin{proof}
The proof proceeds by induction on the number of elements of $M$.
We first analyze the case when $\rank (M)=0$,
which also includes the case when $M$ is empty.
If $\rank (M)=0$, then the matroid $M$ consists of loops only.
It follows that $\cd(M)=1$ (unless it is empty) and $\csd(M)=0$ (the unique contraction$^*$-decomposition of $M$
is the rooted tree consisting of the root only and all loop elements of $M$ assigned to the root).

Suppose that $\rank (M)=1$.
Since $M$ has an element that is not a coloop,
$M$ consists of two or more parallel elements and some (possibly zero) number of loops.
It follows that $\cd(M)=2$ (the contraction of any non-loop element results
in a non-empty matroid consisting of loops only and so of contraction-depth $1$).
The contraction$^*$-depth of $M$ is equal to one and
the unique contraction$^*$-decomposition of $M$ is the rooted tree with a single edge and
all elements of $M$ mapped to its only leaf.

Suppose that $\rank (M)\geq 2$.
If $M$ is not connected, then let $M_1,\ldots,M_k$ be the components of $M$.
We may assume that the first $\ell>0$ components do not consist of a loop or a coloop only
while the remaining $k-\ell$ ones do (note that $\ell$ can be equal to $k$).
Since the contraction-depth of the components $M_{\ell+1},\ldots,M_k$ is equal to one,
the contraction-depth of $M$ is the maximum of the contraction-depths of $M_1,\ldots,M_{\ell}$.
By induction,
there exist contraction$^*$-decompositions $\TT_1,\ldots,\TT_{\ell}$ of the matroids $M_1,\ldots,M_{\ell}$ respectively,
such that the depth of $\TT_i=(T_i,f_i)$ is at most $\cd(M_i)-1$ for each $i\in [\ell]$.
Let $T$ be the rooted tree obtained by identifying the roots of $T_1,\ldots,T_{\ell}$ and
adding a new leaf adjacent to the root for every component $M_{\ell+1},\ldots,M_k$ that
consists of a coloop only.
We define the function $f$ from the elements of $M$ to the leaves of $T$ as follows.
If $e$ is an element of $M_i$ for $i\in [\ell]$, then $f(e)=f_i(e)$.
If $e$ is a coloop, then $e$ is mapped by $f$ to the newly added leaf corresponding to the component consisting of $e$.
Finally, if $e$ is a loop, then $f(e)$ is an arbitrary leaf of $T$.
Observe that $(T,f)$ is a contraction$^*$-decomposition of $M$.
The depth of $T$ is the maximum depth of $T_i$ where $i \in [\ell]$,
thus it is at most \[\max_{i\in [\ell]}\cd(M_i)-1 = \cd(M) - 1.\]
It follows that $\csd(M)\le\cd(M)-1$.

If $\rank (M)\geq 2$ and the matroid $M$ is connected, there exists an element $e$ such that $\cd(M/e)=\cd(M)-1$.
By Lemma~\ref{lm:bridge}, the matroid $M/e$ has no coloops (it may have loops however).
In particular, it is possible to apply induction.
Let $(T,f)$ be a contraction$^*$-decomposition of $M/e$ with depth
at most $\cd(M/e)-1=\cd(M)-2$.
We obtain a contraction$^*$-decomposition of $M$ by creating a new root $v_0$ and
making $v_0$ adjacent to the root of $T$;
the element $e$ is mapped to an arbitrary leaf of $T$.
The obtained rooted tree is a contraction$^*$-decomposition of $M$
since $\rank_M (X)\le\rank_{M/e}(X\setminus\{e\})+1$ and
any path from the root to any leaf contains the edge between $v_0$ and the root of $T$.
Since the depth of the obtained contraction$^*$-decomposition of $M$ is at most $\cd(M)-1$,
we conclude that $\csd(M)\le\cd(M)-1$.
\end{proof}

\section{Auxiliary results}
\label{sec:aux}

In this section
we prove several auxiliary results used in the proof of our main theorem.
We start with the following lemma,
which (informally) says that
if the root of a contraction$^*$-decomposition of a matroid $M$ has degree at least two,
then $M$ is not connected and
the elements of each component of $M$ are mapped to the leaves of the same subtree of a child of the root.
We refer to Figure~\ref{fig:lm:rank} for illustration of the notation used in the statement of the lemma.

\begin{figure}[H]
\begin{center}
\epsfbox{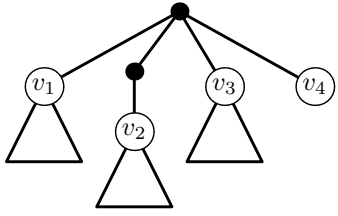}
\end{center}
\caption{Illustration of the notation used in the statement of Lemma~\ref{lm:rank}.}
\label{fig:lm:rank}
\end{figure}

\begin{lemma}
\label{lm:rank}
Let $(T, f)$ be a contraction$^*$-decomposition of a matroid $M$ and
let $v_1,\dots,v_k$ be the $\preccurlyeq$-maximal descendants of the root that are branching vertices or leaves.
Then each set $T(v_i)$, $i\in [k]$, is a union of components of the matroid $M$.
In particular, the following holds for every subset $X$ of elements of $M$:
\[\rank_M(X)=\sum_{i\in [k]}\rank_M\left(X\cap T(v_i)\right).\]
\end{lemma}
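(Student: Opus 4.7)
The plan is to first establish the rank equality $\rank(M)=\sum_i \rank_M(T(v_i))$ and then deduce both claims from it.

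As a preliminary observation, since $(T,f)$ is a contraction$^*$-decomposition we have $\rank(M)\le |E(T\langle f(M)\rangle)|\le |E(T)|=\rank(M)$, so $T\langle f(M)\rangle=T$; in particular, every leaf of $T$ lies in the image of $f$. Next, I would check that the sets $T(v_1),\ldots,T(v_k)$ partition the elements of $M$: by the $\preccurlyeq$-maximality of the $v_i$ among branching/leaf descendants of the root, each leaf of $T$ either equals some $v_i$ (when $v_i$ is itself a leaf) or is a descendant of a unique $v_i$ (the first branching vertex encountered when walking up towards the root). The same maximality property implies that distinct $v_i,v_j$ lie below distinct children of the root, so the paths from the root to the various $v_i$ pairwise share only the root and contribute edge-disjoint contributions.

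Writing $\ell_i$ for the number of edges on the path from the root to $v_i$, the edges of $T$ decompose as $|E(T)|=\sum_i \ell_i+\sum_i |E(T[v_i])|$. Since $f(T(v_i))$ contains every leaf of $T[v_i]$ (by the preliminary observation), the upwards closure $T\langle f(T(v_i))\rangle$ consists of the path from the root to $v_i$ together with all of $T[v_i]$, giving $|E(T\langle f(T(v_i))\rangle)|=\ell_i+|E(T[v_i])|$. Applying the contraction$^*$-decomposition bound to each $T(v_i)$ and summing, together with the subadditivity of the rank function in the other direction, yields
\[
\rank(M)\le \sum_{i\in[k]}\rank_M(T(v_i))\le \sum_{i\in[k]}\bigl(\ell_i+|E(T[v_i])|\bigr)=|E(T)|=\rank(M),
\]
forcing equality throughout; in particular $\sum_i \rank_M(T(v_i))=\rank(M)$.

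Since the $T(v_i)$ form a partition of $M$ whose ranks sum to $\rank(M)$, the converse property recalled in the preliminaries (Proposition~4.2.1 in Oxley) immediately gives that each $T(v_i)$ is a union of components of $M$. For the rank formula applied to an arbitrary $X\subseteq M$, let $X_i=X\cap T(v_i)$; since each $X_i$ decomposes further into its intersections with the components contained in $T(v_i)$, the additivity of rank across components yields $\rank_M(X)=\sum_i \rank_M(X_i)$, finishing the proof.

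The argument is essentially a clean edge-counting combined with rank subadditivity, and the only step requiring some care is the edge-disjointness of the upper paths from the root to different $v_i$, which is where the $\preccurlyeq$-maximality hypothesis is used; this is really the lemma's content, since it is what prevents overcounting in the chain of inequalities above.
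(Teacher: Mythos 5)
Your proposal is correct and follows essentially the same approach as the paper: both arguments are an edge count combining the contraction$^*$-decomposition bound $\rank_M(T(v_i))\le\lvert E(\udt{T}{v_i})\rvert$, the fact that these edge counts sum to $\lvert E(T)\rvert=\rank(M)$, and rank subadditivity to force equality, followed by the Oxley Proposition~4.2.1 converse. The only cosmetic difference is that you present the equality $\sum_i\rank_M(T(v_i))=\rank(M)$ directly via a sandwich of inequalities, whereas the paper derives $\rank_M(T(v_i))=t_i$ for each $i$ by contradiction (and your preliminary observation that $T\langle f(M)\rangle=T$ is true but not actually needed, since the chain only requires the upper bound $\lvert E(T\langle f(T(v_i))\rangle)\rvert\le\ell_i+\lvert E(T[v_i])\rvert$, not equality).
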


\begin{proof}
Fix a matroid $M$ and a contraction$^*$-decomposition $(T,f)$ of $M$.
Let $v_1,\ldots,v_k$ be the vertices of $T$ as in the statement of the lemma and
let $t_i$ be the number of edges of the subtree $\udt{T}{v_i}$, $i\in [k]$.
Note that the tree $T$ has exactly $t_1+\cdots+t_k$ edges.

We will show that $\rank (T(v_i))=t_i$ for every $i \in [k]$.
The definition of a contraction$^*$-decomposition yields that $\rank (T(v_i))\le t_i$ for every $i\in [k]$.
By symmetry, we may assume for contradiction that $\rank (T(v_1))<t_1$.
It follows that $\rank (M\setminus T(v_1)) > t_2+\cdots+t_k$.
However,
the number of edges from the leaves of $T[v_2],\ldots,T[v_k]$ to the root is only $t_2+\cdots+t_k$,
which is impossible since $(T,f)$ is a contraction$^*$-decomposition of $M$.
We conclude that $\rank (T(v_i))=t_i$ for every $i \in [k]$.

Since the rank of the matroid $M$ is $t_1+\cdots+t_k$,
the sets $T(v_1),\ldots,T(v_k)$ partition the elements of $M$, and
$\rank\left(T(v_i)\right)=t_i$ for every $i\in [k]$,
it must be the case that
each set $T(v_i)$ is a union of components of $M$ and so the lemma follows.
\end{proof}

To prove the final lemma of this section, we need the following auxiliary lemma.

\begin{lemma}
\label{lm:onechange}
Let $M$ be a matroid, and
let $X$ and $Z_1\supseteq Z_2\supseteq\cdots\supseteq Z_k$ be subsets of elements of $M$.
Then for any element $e\not\in X\cup Z_1$
there exists $i_0\in [k+1]$ such that
\[
\rank_{M/Z_i}((X\setminus Z_i)\cup\set{e})=
\begin{cases}
\rank_{M/Z_i} (X\setminus Z_i) & \mbox{if $i<i_0$, and}\\
\rank_{M/Z_i} (X\setminus Z_i) + 1 & \mbox{if $i\geq i_0$}
\end{cases}
\]
for all $i\in [k]$.
\end{lemma}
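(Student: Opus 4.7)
}
The plan is to reduce the statement to a monotonicity property of a single quantity, and then derive that monotonicity from submodularity of the rank function of $M$.

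First I would rewrite both ranks in terms of $M$ rather than $M/Z_i$. Using the formula $\rank_{M/Z}(Y)=\rank_M(Y\cup Z)-\rank_M(Z)$ and the fact that $(X\setminus Z_i)\cup Z_i=X\cup Z_i$, one gets
\[
\rank_{M/Z_i}((X\setminus Z_i)\cup\set{e})-\rank_{M/Z_i}(X\setminus Z_i)
=\rank_M(X\cup Z_i\cup\set{e})-\rank_M(X\cup Z_i).
\]
Call this difference $a_i$; since $e$ is a single element, $a_i\in\set{0,1}$. The lemma is then equivalent to showing that the sequence $a_1,a_2,\ldots,a_k$ is non-decreasing (once this is known, taking $i_0$ to be the smallest index with $a_{i_0}=1$, or $i_0=k+1$ if no such index exists, yields exactly the claimed step behaviour).

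The core step is therefore to prove $a_i\le a_{i+1}$ for every $i\in[k-1]$. I would apply submodularity to the sets $A=X\cup Z_i$ and $B=X\cup Z_{i+1}\cup\set{e}$. Because $Z_{i+1}\subseteq Z_i$ and $e\notin X\cup Z_1\supseteq X\cup Z_i$, a direct check gives $A\cup B=X\cup Z_i\cup\set{e}$ and $A\cap B=X\cup Z_{i+1}$. Submodularity then reads
\[
\rank_M(X\cup Z_i\cup\set{e})+\rank_M(X\cup Z_{i+1})\le\rank_M(X\cup Z_i)+\rank_M(X\cup Z_{i+1}\cup\set{e}),
\]
and rearranging this inequality is precisely $a_i\le a_{i+1}$.

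I do not expect any real obstacle here; the only thing to be careful about is the bookkeeping when identifying $A\cup B$ and $A\cap B$, which uses both $Z_{i+1}\subseteq Z_i$ and $e\notin X\cup Z_1$ (the hypothesis on $e$ is needed exactly to ensure $e\notin A\cap B$, so that $A\cap B$ really equals $X\cup Z_{i+1}$). Once the submodularity step is in hand, the threshold $i_0$ is produced by a one-line argument from the monotonicity of the $\set{0,1}$-valued sequence $(a_i)$.
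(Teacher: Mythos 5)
Your proof is correct and takes essentially the same route as the paper. Both arguments reduce to showing that the marginal rank increase $a_i=\rank_M(X\cup Z_i\cup\set{e})-\rank_M(X\cup Z_i)$ is monotone in $i$; the paper phrases this via monotonicity of spans (membership of $e$ in the span of $X\cup Z_i$ is a downward-closed property in $i$ because the sets $X\cup Z_i$ are nested), while you derive the same monotonicity directly from submodularity applied to $A=X\cup Z_i$ and $B=X\cup Z_{i+1}\cup\set{e}$.
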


\begin{proof}
Recall that the \emph{span} of a subset $A$ of the elements of a matroid is the inclusion-wise maximal subset of $A$ with the same rank, and
if $A\subseteq B$, then the span of $A$ is a subset of the span of $B$.
Observe that an element $x$ is contained in the span of $A\cup B$ if and only if $\rank_M A\cup B=\rank_M A\cup B\cup\{x\}$,
which is equivalent to $\rank_{M/B}A=\rank_{M/B}A\cup\{x\}$.

If the element $e$ is not contained in the span of $X\cup Z_1$,
then $e$ is not contained in the span of $X\cup Z_i$ for all $i\in [k]$ and
so $\rank_{M/Z_i}((X\setminus Z_i)\cup\set{e})=\rank_{M/Z_i} (X\setminus Z_i) + 1$ for all $i\in [k]$;
in particular, the statement of the lemma holds with $i_0=1$.
If the element $e$ is contained in the span of $X\cup Z_1$,
set $i_0=j_0+1$ where $j_0\in [k]$ is the largest integer such that $e$ is contained in the span $X\cup Z_{j_0}$.
Since it holds that $\rank_{M/Z_i}((X\setminus Z_i)\cup\set{e})=\rank_{M/Z_i} (X\setminus Z_i)$ for $i\in [j_0]$ and
$\rank_{M/Z_i}((X\setminus Z_i)\cup\set{e})=\rank_{M/Z_i} (X\setminus Z_i) + 1$ for $i\in [k]\setminus [j_0]$,
the statement of the lemma follows.
\end{proof}

We conclude this section with the next lemma concerning ranks
after contracting a set defined by a contraction$^*$-decomposition;
we refer to Figure~\ref{fig:lm:edges} for illustration of the notation.

\begin{figure}[h]
\begin{center}
\epsfbox{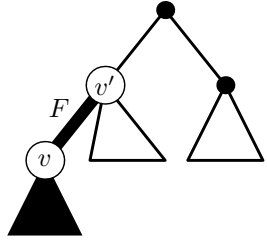}
\end{center}
\caption{Illustration of the notation used in the statement of Lemma~\ref{lm:edges}.}
\label{fig:lm:edges}
\end{figure}

\begin{lemma}
\label{lm:edges}
Let $M$ be a matroid and $(T,f)$ a contraction$^*$-decomposition of $M$.
Let $v$ be a leaf or an internal branching vertex and
$v'$ its $\preccurlyeq$-minimal ancestor that is a branching vertex or the root.
Let $F$ be the union of the set of edges contained inside $T[v]$
and the set of edges on the path from $v$ to $v'$.
Then the size of the set $F$ is at most $\rank_{M/\overline{T(v)}} \paren*{T(v)}$.
\end{lemma}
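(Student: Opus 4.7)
The plan is to use the defining inequality of a contraction$^*$-decomposition applied to $\overline{T(v)}$ and then carefully count the edges of $T$ that are missing from the upward closure $T\langle f(\overline{T(v)})\rangle$. Since $(T,f)$ is a contraction$^*$-decomposition we have $|E(T)| = \rank(M)$, so the rank formula for contractions gives
\[
\rank_{M/\overline{T(v)}}\paren*{T(v)} = \rank(M) - \rank_M\paren*{\overline{T(v)}} = |E(T)| - \rank_M\paren*{\overline{T(v)}}\text{.}
\]
Combining this with the bound $\rank_M\paren*{\overline{T(v)}} \le |E(T\langle f(\overline{T(v)})\rangle)|$ supplied by the definition, it suffices to prove
\[
|E(T\langle f(\overline{T(v)})\rangle)| \le |E(T)| - |F|\text{.}
\]

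To establish this, I would identify the vertices of $T$ that are missing from $T\langle f(\overline{T(v)})\rangle$. The vertices of $T[v]$ are clearly absent, since a leaf of $\overline{T(v)}$ cannot be a descendant of any vertex in $T[v]$. The crucial point is that every vertex $u$ lying strictly between $v$ and $v'$ on the path from $v$ to $v'$ is also absent. By the minimality of $v'$ as a branching ancestor of $v$, each such $u$ has exactly one child; an easy induction along the path shows that the leaf descendants of $u$ coincide with the leaves of $T[v]$. Hence $u$ is not an ancestor of any leaf in $\overline{T(v)}$ and cannot belong to the upward closure.

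The conclusion is then a direct count. Because $T\langle f(\overline{T(v)})\rangle$ is closed under taking ancestors and contains the root, its edges correspond bijectively to its non-root vertices, so the number of edges of $T$ absent from it equals the number of non-root vertices of $T$ absent from it. By the previous paragraph this is at least $|V(T[v])| + (d(v,v')-1)$, where $d(v,v')$ denotes the length of the path from $v$ to $v'$; since $|V(T[v])| = |E(T[v])| + 1$, this simplifies to $|E(T[v])| + d(v,v')  = |F|$, giving the required bound. The main obstacle is recognizing that the interior of the $v$--$v'$ path is absent from the upward closure: without this observation the count falls short by exactly $d(v,v')-1$, and the proof would succeed only in the trivial case when the parent of $v$ is already a branching vertex or the root.
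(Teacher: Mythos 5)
Your proposal is correct and follows essentially the same route as the paper: both use $\rank(M)=\rank_M(\overline{T(v)})+\rank_{M/\overline{T(v)}}(T(v))$ together with the defining inequality of a contraction$^*$-decomposition applied to $\overline{T(v)}$, reducing the claim to showing that the edges of $T\langle f(\overline{T(v)})\rangle$ avoid $F$. The paper states this last fact in one line (``the number of edges not contained in $F$ must be at least $\rank_M(\overline{T(v)})$''), whereas you spell out the justification in detail, in particular the observation that the interior vertices of the $v$--$v'$ path have a unique child (by $\preccurlyeq$-minimality of $v'$) and so cannot appear in the upward closure; that is precisely the point the paper leaves implicit.
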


\begin{proof}
Observe that $\rank (M)=\rank_M \paren*{\overline{T(v)}} + \rank_{M/\overline{T(v)}} \paren*{T(v)}$.
Since $(T,f)$ is a contraction$^*$-decomposition of $M$,
the number of edges not contained in $F$ must be at least $\rank_M \paren*{\overline{T(v)}}$.
Since the total number of edges of $T$ is $\rank (M)$,
it follows that the size of $F$ is at most $\rank (M)-\rank_M \paren*{\overline{T(v)}} = \rank_{M/\overline{T(v)}} \paren*{T(v)}$.
\end{proof}

\section{Main result}
\label{sec:main}

This section is devoted to the proof of our main result.
We start with an informal description of the proof idea.
Our goal is to use a contraction$^*$-decomposition $\TT=(T,f)$ of a matroid $M$ to construct a suitable extension of $M$.
Suppose that $v$ is an internal branching vertex of $T$.
Intuitively,
the length of the path from $v$ to the root corresponds to the ``rank'' of the ``intersection'' of the matroids formed by the elements assigned to different subtrees of $v$.
This intuition manifests in the case of vector matroids as follows (see~\cite{ChaCKKP22}).
If $\TT=(T,f)$ is a contraction$^*$-decomposition of a vector matroid $M$,
it is possible to label the edges of $T$ with vectors in a way that
any subset $X$ of the elements of $M$ is contained in the linear hull of the labels of the edges of $\langle f(X) \rangle$
(note that this implies that $\rank(X)$ is at most the number of the edges of $\langle f(X) \rangle$).
In particular, in the case of vector matroids,
adding the vectors that are labels of the edges yield the sought extension of $M$.
In the general case, we will also introduce a new element for each edge of the tree $T$ and
our aim will be to do so in a way that
the new elements behave as ``generic'' elements of ``intersections'' of the matroids formed by corresponding subtrees,
i.e., they do not create new dependencies unless forced by the submodularity of the rank function.

\subsection{Tamed sets}
\label{subsec:tamed}

In this subsection, we define the notion of tamed sets.
In the following subsections we show that tamed sets form a matroid which contains the given matroid as a restriction, and
we then analyze the contraction-depth of the matroid formed by tamed sets.
For the rest of this subsection,
fix a matroid $M$ and a contraction$^*$-decomposition $\TT = (T, f)$ of $M$.
Let $X_0$ be the set containing all elements of $M$ and all edges of $T$, i.e., $X_0=M\cup E(T)$;
note that $X_0$ has $\lvert M\rvert+\rank (M)$ elements.
The rest of the subsection is devoted to defining when a set $X\subseteq X_0$ is $\TT$-tamed.

\textbf{Token assignment.}
For a set $X\subseteq X_0$,
we define an assignment of tokens to the vertices of the tree $T$,
which is next referred as to the \emph{token assignment with respect to the set $X$}.
Every vertex that is neither a leaf nor an internal branching vertex receives zero tokens.
Let $v$ be a vertex that is either a leaf or an internal branching vertex, and
let $v'$ be the $\preccurlyeq$-minimal ancestor of $v$ that is a branching vertex or the root.
Observe that all internal vertices of the path from $v$ to $v'$ have degree two, and
let $P$ be the set of all edges on the path from $v$ to $v'$ in $T$.
\begin{itemize}
\item If $v$ is a leaf,
      then the number of tokens assigned to the leaf $v$ is equal to 
      \begin{equation}
      \lvert X\cap P\rvert+\rank_{M/\overline{T(v)}} (X\cap T(v))\text{.}\label{eq:leaf}
      \end{equation}
\item If $v$ is an internal branching vertex,
      then we proceed as follows.
      Let $v_1,\ldots,v_k$ be all $\preccurlyeq$-maximal descendants of $v$ that are either leaves or branching vertices.
      The number of tokens assigned to $v$ is equal to
      \begin{equation}
      \lvert X\cap P\rvert+\rank_{M/\overline{T(v)}} (X\cap T(v)) -\sum_{i\in [k]}\rank_{M/\overline{T(v_i)}} (X\cap T(v_i))\text{.}\label{eq:branch}
      \end{equation}
\end{itemize}
This concludes the definition of the token assignment with respect to $X$.

In the next lemma, we show that the notion of a token assignment is well-defined in the sense that
the number of tokens assigned to each vertex is non-negative,
i.e., the expression \eqref{eq:branch} is non-negative for every $X\subseteq X_0$ and every internal branching vertex $v$.

\begin{lemma}
\label{lm:nonnegative}
Let $X$ be an independent set in the matroid $M$. Let $v$ be an internal branching vertex of $T$,
and let $v_1, \dots, v_k$ be all $\preccurlyeq$-maximal descendants of $v$ that are either
branching vertices or leaves. Then
\[
    \rank_{M / \overline{T(v)}}(X \cap T(v)) \geq \sum\limits_{i \in [k]}\rank_{M / \overline{T(v_i)}}(X \cap T(v_i))\text{.}
\]
\end{lemma}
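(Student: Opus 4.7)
The plan is to construct, for each $i \in [k]$, an independent set $B_i \subseteq X \cap T(v_i)$ in the matroid $M/\overline{T(v_i)}$ of size exactly $r_i \coloneqq \rank_{M/\overline{T(v_i)}}(X \cap T(v_i))$, which exists by the definition of rank. I then aim to show that the union $B \coloneqq B_1 \cup \cdots \cup B_k$ is independent in the matroid $N \coloneqq M/\overline{T(v)}$. Because the sets $T(v_i)$ are pairwise disjoint and $B_i \subseteq T(v_i)$, this gives $|B| = \sum_i r_i$; combined with $B \subseteq X \cap T(v)$, independence of $B$ in $N$ immediately yields
\[
\rank_N(X \cap T(v)) \geq \rank_N(B) = |B| = \sum_{i \in [k]} r_i,
\]
which is the desired inequality.

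Two ingredients drive the argument. First, $\overline{T(v_i)} = \overline{T(v)} \cup S_i$, where $S_i \coloneqq T(v) \setminus T(v_i) = \bigcup_{j \neq i} T(v_j)$; hence $M/\overline{T(v_i)} = N/S_i$, and each $B_i$ is independent in $N/S_i$. Second, the standard matroid monotonicity property: if $W' \subseteq W$ and a set $Z$ disjoint from $W$ is independent in $M/W$, then $Z$ is also independent in $M/W'$. This is because the incremental rank $W \mapsto \rank_M(Z \cup W) - \rank_M(W)$ is non-increasing in $W$, and attains its maximum value $|Z|$ exactly when $Z$ is independent in the corresponding contraction.

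Using these, I would prove by induction on $j \in \{0, 1, \ldots, k\}$ that $C_j \coloneqq B_1 \cup \cdots \cup B_j$ is independent in $N$. The base case $j = 0$ is trivial. The key observation for the inductive step is that $C_{j-1} \subseteq \bigcup_{i < j} T(v_i) \subseteq S_j$, so the independence of $B_j$ in $N/S_j$ transfers to independence of $B_j$ in $N/C_{j-1}$ by the monotonicity property. Combining this with the rank identity $\rank_N(C_j) = \rank_N(C_{j-1}) + \rank_{N/C_{j-1}}(B_j)$ and the disjointness of $B_j$ from $C_{j-1}$, one obtains $\rank_N(C_j) = |C_{j-1}| + |B_j| = |C_j|$, completing the induction. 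The main conceptual step is recognizing the nesting $C_{j-1} \subseteq S_j$, which is precisely what allows monotonicity of contraction to be applied in the induction; beyond this, the proof is routine matroid manipulation that does not actually require the hypothesis that $X$ is independent in $M$.
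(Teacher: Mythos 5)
Your proof is correct and follows essentially the same strategy as the paper: both take maximal independent subsets $B_i$ (the paper's $Y_i$) of $X\cap T(v_i)$ in $M/\overline{T(v_i)}$ and prove by induction on $j$ that $B_1\cup\cdots\cup B_j$ is independent in $M/\overline{T(v)}$. The only superficial difference is that the paper extends $B_1\cup\cdots\cup B_{j-1}$ to a basis $Z$ of $T(v)\setminus T(v_j)$ in $M/\overline{T(v)}$ before invoking the independence of $B_j$ in the contraction, whereas you contract directly by $C_{j-1}$ and invoke monotonicity; these are interchangeable. Your closing observation that independence of $X$ in $M$ is never used is also accurate.
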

\begin{proof}
For every $i \in [k]$,
let $Y_i$ be a subset of $X \cap T(v_i)$ of size $\rank_{M / \overline{T(v_i)}}(X \cap T(v_i))$
that is independent in $M / \overline{T(v_i)}$.
Set $Y = Y_1 \cup \dots \cup Y_k$.
Since $Y$ is a subset of $X \cap T(v)$,
the lemma will be proved once we establish that $Y$ is independent in $M / \overline{T(v)}$.

We prove the following claim by induction on $\ell$:

\medskip
\emph{For every $\ell \in [k]$, the set $Y_1 \cup \dots \cup Y_\ell$ is independent in $M / \overline{T(v)}$}.
\medskip

\noindent The base case of the induction holds as $Y_1$ is independent in $M / \overline{T(v_1)}$ and $\overline{T(v)} \subseteq \overline{T(v_1)}$,
therefore $Y_1$ is independent in $M / \overline{T(v)}$.
Suppose we have shown that $Y_1 \cup \dots \cup Y_\ell$ is independent in $M / \overline{T(v)}$ for some $\ell < k$.
Let $Z$ be an independent superset of $Y_1 \cup \dots \cup Y_\ell$ contained in $T(v) \setminus T(v_{\ell + 1})$ of size
$\rank_{M / \overline{T(v)}}(T(v) \setminus T(v_{\ell + 1}))$.
As $Y_{\ell + 1}$ is independent in $M / \overline{T(v_{\ell + 1})} = (M / \overline{T(v)}) / (T(v) \setminus T(v_{\ell + 1}))$,
it must be the case that $Z \cup Y_{\ell + 1}$ is independent in $M / \overline{T(v)}$.
In particular, $Y_1 \cup \dots \cup Y_{\ell + 1}$ is independent in $M / \overline{T(v)}$, and the claim follows.
\end{proof}

Before proceeding further, we compute the total number of tokens assigned to the vertices of $T$.
To do so, we will need the following lemma.
Recall that the matroid $M$ and the contraction$^*$-decomposition $(T,f)$ of $M$ are fixed.

\begin{lemma}
\label{lm:tokens-aux}
Let $X$ be any subset of the set $M\cup E(T)$ such that $X\cap M$ is independent, and
consider the token assignment with respect to the set $X$.
Let $v$ be a a leaf or an internal branching vertex of $T$, and
let $F$ be the union of the set of all edges contained in $T[v]$ and
the set of edges on the path from $v$ to the $\preccurlyeq$-minimal ancestor of $v$ that is a branching vertex or the root.
Then the total number of tokens assigned to all vertices of the subtree $T[v]$
is equal to $\lvert X\cap F\rvert+\rank_{M/\overline{T(v)}}(X\cap T(v))$.
\end{lemma}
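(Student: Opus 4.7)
The plan is to prove the formula by induction on the number of vertices in the subtree $T[v]$ (or equivalently on the number of its branching/leaf vertices). The induction follows the natural recursive structure already built into the token definition: the formula for an internal branching vertex is designed so that the subtree totals telescope cleanly.

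For the base case, $v$ is a leaf. Then $T[v] = \{v\}$, the set $F$ coincides with the path $P$ from $v$ to its $\preccurlyeq$-minimal branching/root ancestor $v'$, and the total number of tokens in $T[v]$ equals the tokens at $v$, which by \eqref{eq:leaf} is exactly $\card{X\cap P}+\rank_{M/\overline{T(v)}}(X\cap T(v)) = \card{X\cap F}+\rank_{M/\overline{T(v)}}(X\cap T(v))$.

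For the inductive step, assume $v$ is an internal branching vertex, and let $v_1,\ldots,v_k$ be its $\preccurlyeq$-maximal descendants that are branching vertices or leaves. The crucial structural observation is that for each $i$, the vertex $v$ is the $\preccurlyeq$-minimal ancestor of $v_i$ that is a branching vertex or the root: every strict descendant of $v$ lying on the $v$--$v_i$ path has degree two in $T$, hence is neither a leaf nor branching. These degree-two intermediate vertices therefore contribute zero tokens, so the total count of tokens in $T[v]$ splits as the tokens at $v$ plus the sum, over $i\in[k]$, of the tokens in $T[v_i]$. Applying the inductive hypothesis to each $v_i$ yields $\card{X\cap F_i}+\rank_{M/\overline{T(v_i)}}(X\cap T(v_i))$, where $F_i$ is the edge set associated to $v_i$ (edges inside $T[v_i]$ together with the path from $v_i$ up to $v$). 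Substituting \eqref{eq:branch} for the tokens at $v$, the $-\sum_i \rank_{M/\overline{T(v_i)}}(X\cap T(v_i))$ term precisely cancels the rank contributions coming from the inductive hypothesis.

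What remains is to identify $\card{X\cap P}+\sum_i \card{X\cap F_i}$ with $\card{X\cap F}$. This is a purely combinatorial check: the sets $F_1,\ldots,F_k$ are pairwise disjoint (the subtrees $T[v_i]$ are disjoint and the paths from distinct $v_i$ to $v$ meet only at the vertex $v$), their union is exactly the edge set of $T[v]$, and adjoining the path $P$ gives $F$ by definition. I expect the main (minor) obstacle to be precisely this bookkeeping, making sure the edge set $F$ really does partition as $P\sqcup F_1\sqcup\cdots\sqcup F_k$ and that no intermediate degree-two vertex has been overlooked in the token sum; once that is set up, the algebraic cancellation is immediate and the rank-theoretic content of the lemma (which is essentially Lemma~\ref{lm:nonnegative} ensuring non-negativity of the individual token counts) plays no further role in the counting argument.
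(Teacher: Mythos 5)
Your proposal is correct and matches the paper's own proof essentially verbatim: both induct bottom-up on the subtree, use \eqref{eq:leaf} for the base case, and in the inductive step split the token count of $T[v]$ into the contribution from $v$ via \eqref{eq:branch} plus the contributions from the subtrees $T[v_i]$, with the rank terms cancelling telescopically and the edge counts combining via the partition $F = P\sqcup F_1\sqcup\cdots\sqcup F_k$. Your explicit note that the degree-two vertices on the $v$--$v_i$ paths receive zero tokens (and hence do not disturb the sum) is a detail the paper leaves implicit but is exactly the right bookkeeping.
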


\begin{proof}
If $v$ is a leaf, then the quantity in the statement of the lemma matches \eqref{eq:leaf}.
If $v$ is not a leaf,
then let $v_1,\dots,v_k$ be the $\preccurlyeq$-maximal descendants of $v$ that are branching vertices or leaves.
For every $i\in [k]$, let $F_i$ be the union of the set of edges contained in $T[v_i]$
and the set of edges on the path from $v_i$ to $v$.
Note that the set $F$ from the statement of the lemma is the union of the sets $F_i$ for $i\in [k]$ and
the set of edges of the path from $v$ to its $\preccurlyeq$-minimal ancestor that is a branching vertex or the root.
It follows that $F\setminus\bigcup_{i\in [k]} F_i$
is the set of edges of the path from $v$ to its $\preccurlyeq$-minimal ancestor that is a branching vertex or the root.
Using \eqref{eq:branch},
we obtain that the number of tokens assigned to $v$ itself is equal to
\[
\left\lvert X\cap\left(F\setminus\bigcup_{i\in [k]} F_i\right)\right\rvert+\rank_{M/\overline{T(v)}} (X\cap T(v)) - \sum_{i\in [k]}\rank_{M/\overline{T(v_i)}} (X\cap T(v_i))\text{.}
\]
By induction, it holds that the number of tokens assigned to the vertices of $T[v_i]$ is equal to
\[\left\lvert X\cap F_i\right\rvert+\rank_{M/\overline{T(v_i)}} (X\cap T(v_i))\]
for every $i\in [k]$.
Since the sets $F_1,\ldots,F_k$ are disjoint,
we obtain that the number of tokens assigned to the vertices of $T[v]$ is equal to
\[\left\lvert X\cap F\right\rvert+\rank_{M/\overline{T(v)}} (X\cap T(v))\]
as desired.
\end{proof}

We are now ready to compute the number of tokens assigned with respect to a set $X$.

\begin{lemma}
\label{lm:tokens}
Let $X$ be any subset of the set $M\cup E(T)$ such that $X\cap M$ is independent.
Then the token assignment with with respect to the set $X$ contains exactly $\lvert X\rvert$ tokens.
\end{lemma}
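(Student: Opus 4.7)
The plan is to apply Lemma~\ref{lm:tokens-aux} at each of the $\preccurlyeq$-maximal descendants of the root that is a branching vertex or a leaf, and then add the resulting contributions. Let $v_1,\ldots,v_k$ be these vertices. Since each $v_i$ is $\preccurlyeq$-maximal among branching vertices and leaves below the root, its $\preccurlyeq$-minimal ancestor that is branching or the root is the root itself. Define $F_i$ to be the union of the set of edges inside $T[v_i]$ and the set of edges on the path from $v_i$ to the root, so that Lemma~\ref{lm:tokens-aux} applied at $v_i$ tells us that the subtree $T[v_i]$ carries exactly $|X\cap F_i|+\rank_{M/\overline{T(v_i)}}(X\cap T(v_i))$ tokens.

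The key combinatorial observation is that $F_1,\ldots,F_k$ form a partition of $E(T)$. Pairwise disjointness of the $F_i$'s follows because a common ancestor of two distinct $v_i,v_j$ strictly below the root would have to be branching (any degree-$2$ ancestor has a unique descendant path, so it could lie above only one $\preccurlyeq$-maximal branching-or-leaf vertex), contradicting $\preccurlyeq$-maximality; the root itself contributes no edge. Every edge of $T$ is either contained in some $T[v_i]$ or lies on the path from a unique $v_i$ up to the root, so $E(T)=\bigsqcup_{i\in[k]} F_i$. Moreover, every vertex of $T$ outside $\bigcup_i V(T[v_i])$ has degree $2$ (or is the non-leaf root), so receives zero tokens. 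Summing the contributions from Lemma~\ref{lm:tokens-aux} over $i\in[k]$ therefore gives a total of
\[
|X\cap E(T)|+\sum_{i\in [k]}\rank_{M/\overline{T(v_i)}}(X\cap T(v_i))
\]
tokens.

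It remains to show that the rank sum equals $|X\cap M|$. By Lemma~\ref{lm:rank}, each $T(v_i)$ is a union of components of $M$, so $M$ splits as a direct sum of $M|T(v_i)$ and $M|\overline{T(v_i)}$; contracting the complement does not affect ranks inside $T(v_i)$, giving $\rank_{M/\overline{T(v_i)}}(X\cap T(v_i))=\rank_M(X\cap T(v_i))$. Applying the component-decomposition identity from Lemma~\ref{lm:rank} to the subset $X\cap M$ then yields $\sum_{i\in[k]}\rank_M(X\cap T(v_i))=\rank_M(X\cap M)$, and this last quantity equals $|X\cap M|$ because $X\cap M$ is assumed independent. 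Combining these gives a grand total of $|X\cap E(T)|+|X\cap M|=|X|$ tokens.

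The main obstacle I anticipate is the clean bookkeeping of the $F_i$'s—namely, verifying that they genuinely partition $E(T)$ and that no vertex outside $\bigcup_i V(T[v_i])$ contributes. The degenerate case $|V(T)|=1$, in which the root is itself a leaf, is handled separately: there $\rank(M)=0$ forces $X\cap M$ to be empty (as it is independent and $M$ consists only of loops), $E(T)=\emptyset$, and the token count at the root reduces to $0$, matching $|X|=0$.
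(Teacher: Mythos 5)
Your proof is correct and takes essentially the same route as the paper: apply Lemma~\ref{lm:tokens-aux} at each $\preccurlyeq$-maximal branching-or-leaf descendant $v_i$ of the root, observe that the $F_i$ partition $E(T)$, and invoke Lemma~\ref{lm:rank} to convert the rank sum into $\lvert X\cap M\rvert$. The only cosmetic differences are that you handle the degenerate single-vertex tree explicitly (the paper leaves it implicit) and you treat $k=1$ and $k\geq 2$ uniformly via Lemma~\ref{lm:rank} rather than splitting into cases as the paper does.
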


\begin{proof}
Let $v_1,\dots,v_k$ be $\preccurlyeq$-maximal descendants of the root that are branching vertices or leaves, and
let $F_i$ be the set of edges contained in $\udt{T}{v_i}$.
If $k=1$, then Lemma~\ref{lm:tokens-aux} implies that the number of tokens assigned to the vertices of $T$
(note that $T(v_1)=M$ and $F_1$ are all edges of $T$) is equal to
\[\lvert X\cap F_1\rvert+\rank_M (X\cap T(v_1))=\lvert X\cap E(T)\rvert+\lvert X\cap M\rvert=\lvert X\rvert\text{.}\]
Hence, the number of tokens assigned to the vertices of $T$ is equal to $\lvert X\rvert$.

If $k\geq 2$, then Lemma~\ref{lm:tokens-aux} implies that the number of tokens assigned to the vertices of $T$
is equal to
\begin{equation}
\sum_{i\in [k]}\paren*{\left\lvert X\cap F_i\right\rvert+\rank_{M/\overline{T(v_i)}} (X\cap T(v_i))}\text{.}\label{eq:tokens1}
\end{equation}
Note that each $T(v_i)$ is a union of components of the matroid $M$ by Lemma~\ref{lm:rank}.
It follows that
$\rank_{M/\overline{T(v_i)}} (X\cap T(v_i))=\rank_M (X\cap T(v_i))$ for every $i\in [k]$.
Using that $X\cap M$ is independent in $M$ and
that the sets $F_1, \dots, F_k$ partition the edge set of $T$,
we obtain that \eqref{eq:tokens1} is equal to
\begin{align*}
\sum_{i \in [k]}\paren*{\card*{X \cap F_i} + \rank_M (X \cap T(v_i))}
    &= \sum_{i \in [k]} \card{X \cap F_i} + \sum_{i \in [k]}\rank_M (X \cap T(v_i))\\
    &= \card*{X \cap \paren*{F_1 \cup \dots \cup F_k}} + \rank_M(X \cap M)\\
    &= \card*{X \cap E(T)} + \card{X \cap M} = \card{X}\text{.}
\end{align*}
We conclude that the number of tokens assigned to the vertices of $T$ is equal to $\lvert X\rvert$.
\end{proof}

\textbf{Token distribution.}
Once the tokens have been assigned to the leaves and internal branching vertices of $T$,
they are distributed within the tree $T$ in a bottom up fashion using a procedure that we now describe
(also see Figure~\ref{fig:distribution} for an example).
Vertices that have already sent (some of) their tokens up will be \emph{marked}.
At the beginning no vertex is marked.
As long as there is a non-root vertex $v$ such that all of its descendants are marked we do the following.
Let $k$ be the number of tokens held by $v$.
If $k = 0$, then $v$ becomes marked (and sends no tokens as it has no tokens).
If $k > 0$, then $v$ sends $k - 1$ tokens to its parent and $v$ then becomes marked.
Observe that the procedure terminates when all vertices other than the root are marked.

\begin{figure}
\begin{center}
\epsfbox{cdepth-7.mps}
\end{center}
\caption{Illustration of the steps of the procedure for token distribution.
         Marked vertices are denoted by bold circles.}
\label{fig:distribution}
\end{figure}

\textbf{Definition of tamed sets.}
A set $X\subseteq X_0$ is \emph{$\TT$-tamed}
if $X\cap M$ is independent and
the procedure for the token distribution starting with the token assignment with respect to $X$
yields an assignment such that the root has zero tokens at the end of the procedure.
The set of $\TT$-tamed subsets will be denoted by $\II_M^{\TT}$;
if the contraction$^*$-decomposition $\TT$ is clear from the context, we will simply write $\II_M$.

Observe that if the matroid $M$ is given by the independence oracle or by the rank oracle,
then it can be decided in polynomial time whether a subset of $M\cup E(T)$ is $\TT$-tamed.
Also observe that the set $E(T)$, i.e., the set formed by all edges of the tree $T$, is always $\TT$-tamed.
Indeed, if $v$ is a vertex that is either a leaf or an internal branching vertex, and
$v'$ is the $\preccurlyeq$-minimal ancestor of $v$ that is a branching vertex or the root,
the tokens assigned to $v$ in the token assignment with respect to $E(T)$
are distributed exactly to the bottom vertices of the edges of the path from $v$ to $v'$.

\subsection{Analysis}
\label{subsec:anal}

The goal of this subsection is to show that
if $M$ is a matroid and $\TT$ is a contraction$^*$-decomposition of $M$,
then the collection of all $\TT$-tamed sets is hereditary and satisfies the augmentation axiom,
i.e., the $\TT$-tamed sets form a collection of independent sets of a matroid.

We start with showing that the collection of all $\TT$-tamed sets is hereditary.

\begin{lemma}
\label{lm:hereditary}
Let $M$ be a matroid and $\TT=(T, f)$ a contraction$^*$-decomposition of $M$.
Then the collection $\II_M^\TT$ of all $\TT$-tamed sets is hereditary.
\end{lemma}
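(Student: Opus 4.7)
The plan is to show that if $X \in \II_{M}^{\TT}$ and $Y = X \setminus \set{x}$ for some $x \in X$, then $Y \in \II_{M}^{\TT}$. Since matroid independence is hereditary, $Y \cap M \subseteq X \cap M$ is independent in $M$, so the real task is to show that the distribution procedure starting from the token assignment with respect to $Y$ also leaves zero tokens at the root.

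The heart of the argument is a reduction to the following claim: the initial token assignments with respect to $X$ and $Y$ differ by $-1$ at exactly one leaf or internal branching vertex $v^{*}$, and agree elsewhere. When $x \in E(T)$, the vertex $v^{*}$ is the unique leaf or internal branching vertex whose path to its $\preccurlyeq$-minimal branching-or-root ancestor contains the edge $x$, and the claim is immediate from \eqref{eq:leaf} and \eqref{eq:branch}. When $x \in M$, I would let $f(x) = u_{0}, u_{1}, \dots, u_{m}$, where $u_{m}$ is the root, denote the chain of ancestors of $f(x)$ that are leaves, internal branching vertices, or the root, and apply Lemma~\ref{lm:onechange} with $Z_{i} = \overline{T(u_{i-1})}$ and $e = x$ (reading the transition as the insertion of $x$ into $Y$ rather than the removal of $x$ from $X$). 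The lemma yields a threshold $j_{0} \in \set{0, 1, \dots, m}$ such that $\rank_{M/\overline{T(u_{j})}}(X \cap T(u_{j})) - \rank_{M/\overline{T(u_{j})}}(Y \cap T(u_{j}))$ equals $0$ for $j < j_{0}$ and $1$ for $j \geq j_{0}$; the telescoping structure of \eqref{eq:branch} then confines the net change in initial tokens to the single vertex $u_{j_{0}}$. The strict inequality $j_{0} < m$ follows from Lemma~\ref{lm:tokens}, since the totals of initial tokens for $X$ and $Y$ must differ by $\card{X} - \card{Y} = 1$ and therefore the change cannot be empty.

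With this localisation in hand, I would propagate the deficit $\Delta(v) = \sigma_{X}(v) - \sigma_{Y}(v)$ up the tree, where $\sigma(v) = t_{0}(v) + \sum_{u} \max(\sigma(u) - 1, 0)$, with $u$ ranging over children of $v$, is the total number of tokens at $v$ at the moment the distribution procedure processes it (so $\sigma(\text{root})$ is exactly the number of tokens left at the root at the end). Off the path from $v^{*}$ to the root the deficit is zero, while $\Delta(v^{*}) = 1$. A direct case analysis of the recurrence shows that along this path the deficit is non-increasing, changes by at most one per step, takes values only in $\set{0, 1}$, and remains at $0$ once it reaches $0$. Lemma~\ref{lm:nonnegative} applied to the independent set $Y \cap M$ implies $\sigma_{Y}(\text{root}) \geq 0$, and $\sigma_{X}(\text{root}) = 0$ because $X$ is $\TT$-tamed; hence $\Delta(\text{root}) \leq 0$, which combined with $\Delta(\text{root}) \in \set{0, 1}$ forces $\Delta(\text{root}) = 0$. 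This yields $\sigma_{Y}(\text{root}) = 0$, so $Y$ is $\TT$-tamed.

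The main obstacle is the case $x \in M$: several rank terms in \eqref{eq:branch} change simultaneously along the chain of ancestors of $f(x)$, and Lemma~\ref{lm:onechange} is the key device that collapses these many small changes into a single net decrease at one well-defined non-root vertex, so that the clean propagation argument of the previous paragraph applies uniformly in both cases.
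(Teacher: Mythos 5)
Your proof is correct and follows essentially the same route as the paper's: reduce the removal of an element to showing the initial token assignment decreases by one at exactly one leaf or internal branching vertex (clear for $x\in E(T)$ via the path term in \eqref{eq:leaf}/\eqref{eq:branch}, and via Lemma~\ref{lm:onechange} with $\overline{T(\cdot)}$ chains for $x\in M$, with Lemma~\ref{lm:tokens} ruling out a vacuous change), then observe that such a local decrease cannot increase the root's final token count. Where you go further is in explicitly proving the monotonicity of the distribution procedure via the $\Delta(v)\in\{0,1\}$ propagation, a step the paper leaves implicit with ``it follows that if the root of $T$ gets no tokens\dots''; that extra rigor is a genuine improvement, though the appeal to Lemma~\ref{lm:nonnegative} at the end is unnecessary (nonnegativity of $\sigma_Y(\text{root})$ is automatic since tokens are by construction nonnegative integers).
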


\begin{proof}
Consider a non-empty set $X$ contained in $\II_M^\TT$ and an element $e\in X$.
Our goal is to show that $X\setminus\set{e}$ is also $\TT$-tamed.

Suppose that $e$ is an edge of the tree $T$ and
let $u$ be the bottom vertex of $e$. If $u$ is a leaf or a branching vertex, then we set $v = u$.
Otherwise we define $v$ to be the $\preccurlyeq$-maximal descendant of $u$ that is either a leaf or a branching vertex
(note that the vertex $v$ is uniquely defined).
Observe that every vertex of $T$ except for $v$ is assigned the same number of tokens
with respect to the set $X$ and with respect to the set $X\setminus\{e\}$, and
the vertex $v$ is assigned exactly one token less.
It follows that
if the root of $T$ gets no tokens
during the procedure for the token distribution starting with the token assignment with respect to $X$,
then the root of $T$ gets no tokens 
during the procedure for the token distribution starting with the token assignment with respect to $X\setminus\{e\}$.
In particular, the set $X\setminus\set{e}$ is $\TT$-tamed.

It remains to deal with the case when $e$ is an element of the matroid $M$.
Let $v_0$ be the leaf such that $f(e) = v_0$, and
let $v_1,\ldots,v_k$ be all internal branching vertices on the path from $v_0$ to the root
listed in the ascending order by $\preccurlyeq$.
We apply Lemma~\ref{lm:onechange} with the matroid $M$,
the set $(X\setminus\{e\})\cap M$ and
$\overline{T(v_0)}\supseteq\overline{T(v_1)}\supseteq\cdots\supseteq\overline{T(v_k)}$.
The lemma yields that there exists $i_0\in\set{0}\cup [k+1]$ such that
\[
\rank_{M/\overline{T(v_i)}}(X\cap T(v_i))=
\begin{cases}
\rank_{M/\overline{T(v_i)}} ((X\setminus\{e\})\cap T(v_i)) & \mbox{if $i<i_0$, and}\\
\rank_{M/\overline{T(v_i)}} ((X\setminus\{e\})\cap T(v_i))+1 & \mbox{if $i\geq i_0$.}
\end{cases}
\]
Note that $i_0\leq k$, as otherwise we would have $i_0=k+1$,
which implies that the number of tokens assigned with respect to the set $X\setminus\{e\}$ and
with respect to the set $X$ would be the same, which is impossible by Lemma~\ref{lm:tokens}.
It follows that the vertex $v_{i_0}$ is assigned one token less
with respect to the set $X\setminus\{e\}$ compared
to the number of tokens assigned with respect to the set $X$;
all other vertices are assigned the same number of tokens.
We conclude that
if the root of $T$ gets no tokens during the distribution phase with respect to the set $X$,
then the root of $T$ gets no tokens during the distribution phase with respect to the set $X\setminus\{e\}$ and
so the set $X\setminus\set{e}$ is $\TT$-tamed.
\end{proof}

We are now ready to prove that the collection of $\TT$-tamed sets is a collection of independent sets of a matroid.

\begin{theorem}
\label{thm:matroid}
Let $M$ be a matroid and $\TT = (T,f)$ a contraction$^*$-decomposition of $M$.
Then the collection $\II_M^T$ is non-empty, hereditary and satisfies the augmentation axiom.
\end{theorem}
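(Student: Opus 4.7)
The plan is to verify the three required properties separately. Non-emptiness is immediate: the token assignment with respect to $\emptyset$ puts zero tokens at every vertex, so the distribution procedure trivially leaves zero tokens at the root, and $\emptyset\cap M$ is independent in $M$. The hereditary property is exactly Lemma~\ref{lm:hereditary}. The remaining and main task is the augmentation axiom.

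Fix $X, Y \in \II_M^\TT$ with $\card{X} < \card{Y}$; the goal is to find $e \in Y \setminus X$ with $X \cup \set{e} \in \II_M^\TT$. The first step is the following structural observation, obtained by direct inspection of \eqref{eq:leaf} and \eqref{eq:branch} together with Lemma~\ref{lm:onechange}: if $(X \cap M) \cup (\set{e} \cap M)$ is independent in $M$, then adding $e$ to $X$ increases the token count by exactly one at a single leaf or internal branching vertex $v(e)$ of $T$ and leaves the token counts at all other vertices unchanged. For an edge $e \in E(T)$, the vertex $v(e)$ is the unique leaf or internal branching vertex on whose path to its nearest branching ancestor (or the root) the edge $e$ lies. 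For a matroid element $e$, the vertex $v(e)$ is the one singled out by Lemma~\ref{lm:onechange} applied along the chain $\overline{T(v_0)} \supseteq \overline{T(v_1)} \supseteq \cdots$, where $v_0 = f(e)$ and $v_1,\ldots,v_k$ are the internal branching vertices on the path from $v_0$ to the root. Writing $r_Z(u)$ for the number of tokens present at $u$ after it collects from its descendants in the distribution for $Z$ (so that $u$ is saturated precisely when $r_Z(u) > 0$, and by Lemma~\ref{lm:tokens} the number of saturated non-root vertices equals $\card{Z}$), the set $X \cup \set{e}$ is $\TT$-tamed if and only if the extra token is absorbed before reaching the root; equivalently, if and only if the path from $v(e)$ to the root contains a \emph{non-saturated} vertex (one with $r_X = 0$).

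The plan now is to pick a non-root vertex $u$ with $r_Y(u) > r_X(u)$, which must exist since $\card{Y} > \card{X}$, and to choose $u$ to be $\preccurlyeq$-minimal with this property. An elementary case analysis at the descendants $u'$ of $u$ (using that the amount sent from $u'$ to its parent in the $Z$-distribution equals $\max(r_Z(u') - 1, 0)$) forces that the sent amounts from the children of $u$ satisfy $s_Y(w) \le s_X(w)$, and consequently $t_u(Y) > t_u(X)$; in particular, $u$ must be a leaf or an internal branching vertex. Inspecting \eqref{eq:leaf} or \eqref{eq:branch} at $u$ then produces at least one of the following: (i) an edge $e \in Y \setminus X$ on the path from $u$ to its nearest branching ancestor (or the root), in which case $v(e) = u$; or (ii) $\rank_{M/\overline{T(u)}}(Y \cap T(u)) > \rank_{M/\overline{T(u)}}(X \cap T(u))$, in which case the matroid augmentation axiom applied in $M / \overline{T(u)}$ yields an element $e \in (Y \cap T(u)) \setminus X$ with $(X \cap M) \cup \set{e}$ independent in $M$, and Lemma~\ref{lm:onechange} guarantees $v(e) \in T[u]$. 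Since in both cases $u$ lies on the path from $v(e)$ to the root and $r_X(u) = 0$, the set $X \cup \set{e}$ is $\TT$-tamed, proving the axiom.

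The main obstacle is handling the case when $u$ is an internal branching vertex: the formula \eqref{eq:branch} then carries a subtracted sum over descendant ranks, and one must rule out the pathological situation in which the increase $t_u(Y) > t_u(X)$ comes entirely from a strict decrease in this descendant-rank sum rather than from an edge count or a rank increase at $u$ itself. The resolution exploits the $\preccurlyeq$-minimality of $u$, which forces $r_Y(u') \le r_X(u')$ at every proper descendant $u'$, and combines this with Lemma~\ref{lm:nonnegative} and the submodularity of the rank function to trace any descendant-rank decrease back to a compensating discrepancy at $u$ itself or on the path $P_u$, ultimately delivering case~(i) or~(ii) as required.
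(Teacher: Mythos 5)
The non-emptiness and hereditary parts are fine and match the paper. Your framework for the augmentation axiom is close in spirit to the paper's, but the paper tracks a different quantity: it defines $d(v)$ for each leaf or internal branching vertex $v$ as the number of tokens \emph{kept on the whole path segment} from $v$ to its nearest branching ancestor, and picks a $\preccurlyeq$-minimal $v$ with $d(v)<d'(v)$; you instead track $r_Z(u)$ at \emph{individual} vertices. Either way, the heart of the matter is the same case split at $u$ (or $v$): an edge of $E(T[u])\cup P_u$ in $Y\setminus X$, or a rank increase in $M/\overline{T(u)}$.

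The genuine gap is exactly the one you flag and then defer: when $u$ is an internal branching vertex, the increase $t_u(Y)>t_u(X)$ can come entirely from the subtracted sum in~\eqref{eq:branch}. Your proposed resolution is not carried out, and I do not believe it can be made to work in the form you describe (``trace the decrease back to a compensating discrepancy at $u$ itself or on the path $P_u$''), because the discrepancy need not be local to $T[u]\cup P_u$. Concretely, consider $T$ with root $\to u_1 \to \{v,z\}$ where $z$ is a leaf and $v$ has two leaf children $a,b$; let $M$ have ground set $\{m_1,\dots,m_6\}$ with $m_1,m_2,m_3,m_5,m_6$ a basis and $m_4=m_1+m_2+m_3$, with $f$ mapping $\{m_1,m_2\}\to a$, $\{m_3,m_4\}\to b$, $\{m_5,m_6\}\to z$. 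Take $X=\{m_1,m_3\}$ and $Y=\{m_1,m_2,m_5\}$. Both are tamed, $\card{X}<\card{Y}$, and $v$ is a $\preccurlyeq$-minimal vertex with $r_Y(v)>r_X(v)$ (also with $d(v)<d'(v)$). Yet $Y\cap P_v=X\cap P_v=\emptyset$ and
\[
\rank_{M/\overline{T(v)}}\bigl(Y\cap T(v)\bigr)=\rank_{M/\overline{T(v)}}\bigl(X\cap T(v)\bigr)=2,
\]
so neither of your cases~(i) or~(ii) fires, even though $t_v(Y)=1>0=t_v(X)$ (the increase is due to $\rank_{M/\overline{T(b)}}(Y\cap T(b))=0<1=\rank_{M/\overline{T(b)}}(X\cap T(b))$). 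The augmentation axiom does hold here, but the working element is $m_2$, which is \emph{not} obtained by the matroid augmentation axiom in $M/\overline{T(v)}$ since $Y\cap T(v)$ and $X\cap T(v)$ have equal rank; what matters is that $m_2\in Y\cap T(v)$ lies outside the span of $X\cap T(v)$ in $M/\overline{T(v)}$, which is a strictly weaker fact than a rank inequality, and requires its own argument. So the case~(ii) criterion you wrote is the wrong test, and the hand-waved bridge over the internal-branching-vertex case is a genuine missing step. (As a side remark, in this example the paper's own statement also needs the freedom to choose a different minimal vertex, namely $z$, so the choice of minimal vertex is more delicate than either presentation makes it look; but your case dichotomy fails outright at $v$, which is the deeper problem.)
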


\begin{proof}
The collection $\II_M^\TT$ is non-empty.
Indeed, there are no tokens assigned with respect to the empty set, and
so the empty set is $\TT$-tamed.
By~\Cref{lm:hereditary}, the collection $\II_M^\TT$ is hereditary.
Thus, we need to prove that the collection $\II_M^T$ satisfies the augmentation axiom.
Consider $\TT$-tamed sets $X$ and $X'$ such that $\lvert X\rvert<\lvert X'\rvert$.
We will establish that there exists an element $e\in X' \setminus X$ such that $X\cup\set{e}$ is $\TT$-tamed.

For a vertex $v$ of the tree $T$ that is either a leaf or an internal branching vertex,
we define $d(v)$ to be the number of tokens kept by vertices on the path from $v$ (including $v$)
to the $\preccurlyeq$-minimal ancestor of the vertex $v$ that is a branching vertex or the root
(excluding this ancestor)
during the procedure for the token distribution starting with the token assignment with respect to $X$.
Analogously, we define $d'(v)$ for the token assignment with respect to $X'$.
Note that the sum of all $d(v)$ (the sum is taken over all leaves and internal branching vertices)
is the total number of tokens, which, by \Cref{lm:tokens}, equals $\card{X}$.
Similarly, the sum of all $d'(v)$ is $\lvert X'\rvert$.
Let $v$ be a $\preccurlyeq$-minimal vertex such that $d(v)<d'(v)$, and
let $P$ be the set of edges of the path from $v$ to the $\preccurlyeq$-minimal ancestor of $v$
that is a branching vertex or the root;
see Figure~\ref{fig:thm:matroid} for illustration of the notation.
Since $d'(v)$ cannot exceed $\lvert P\rvert$,
$d(v)$ is strictly smaller than $\lvert P\rvert$ and
so at least one of the bottom vertices of the edges in $P$
kept no tokens during the procedure for the token distribution starting with the token assignment with respect to $X$.

\begin{figure}[H]
\begin{center}
\epsfbox{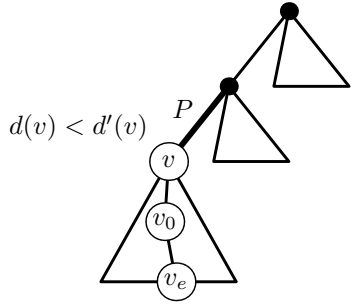}
\end{center}
\caption{Illustration of the notation used in the statement of Theorem~\ref{thm:matroid}.}
\label{fig:thm:matroid}
\end{figure}

If the set $X'$ contains an edge $e\in E(T[v])\cup P$ that is not contained in $X$,
then the set $X\cup\{e\}$ is $\TT$-tamed and so the sought element has been identified.
Indeed, the number of tokens assigned to the vertices increases by one exactly for one vertex of $T[v]$, and
the additional token is either kept by a vertex of $T[v]$ or by one of the bottom vertices of the edges in $P$.

If there is no such edge,
i.e., every edge of $E(T[v])\cup P$ contained in $X'$ is also contained in $X$,
Lemma~\ref{lm:tokens-aux} yields that
\begin{equation}
\rank_{M/\overline{T(v)}} (X\cap T(v)) <\rank_{M/\overline{T(v)}} (X'\cap T(v))\text{.}
\label{eqn:ranksinequality}
\end{equation}
Let $e$ be any element of $X'\cap T(v)$ such that $(X\cap T(v))\cup\set{e}$
has larger rank than the set $X\cap T(v)$ in the matroid $M/\overline{T(v)}$;
the element $e$ exists by \eqref{eqn:ranksinequality}.
In the rest of the proof, we will show that the set $X\cup\set{e}$ is $\TT$-tamed.

Since the set $(X\cap T(v))\cup\set{e}$ is independent in the matroid $M/\overline{T(v)}$,
the set $(X\cap T(v))\cup\set{e}\cup Y$ is independent for any set $Y\subseteq\overline{T(v)}$ that
is independent in the matroid $M$.
The choice of $Y=X\cap\overline{T(v)}$ yields that
\[\paren*{X \cap M}\cup\set{e}=(X\cap T(v))\cup\set{e}\cup(X\cap\overline{T(v)})\]
is independent in the matroid $M$.
We next analyze the assignment of tokens and their distribution.
Let $v_e$ be the leaf of $T$ such that $f(e) = v_e$.
Note that the number of tokens assigned to the vertices that are not on the path from $v_e$ to the root
has not changed
as neither the formula \eqref{eq:leaf} nor the formula \eqref{eq:branch}
is affected for such vertices by adding $e$ to the set $X$.
Let $v_0$ be the first vertex on the path from $v_e$ to the root such that
$\rank_{M/\overline{T(v_0)}}((X\cap T(v_0))\cup\set{e})=\rank_{M/\overline{T(v_0)}}(X\cap T(v_0))+1$.
The choice of $e$ implies that
the vertex $v_0$ exists and
is on the path from $v_e$ to $v$ (inclusive of the end vertices of the path).
The choice of $v_0$ yields that
\[\rank_{M/\overline{T(v')}}((X\cap T(v'))\cup\set{e})=\rank_{M/\overline{T(v')}}(X\cap T(v'))\]
for any vertex $v'$ on the path from $v_e$ to $v_0$ (inclusive of $v_e$ and exclusive of $v_0$).
Lemma~\ref{lm:onechange} now yields that
\[\rank_{M/\overline{T(v')}}((X\cap T(v'))\cup\set{e})=\rank_{M/\overline{T(v')}}(X\cap T(v'))+1\]
for any vertex $v'$ on the path from $v_0$ to the root (inclusive of $v_0$ and exclusive of the root).

Lemma~\ref{lm:tokens-aux} yields that
if $v'$ is a leaf or an internal branching vertex,
then the number of tokens assigned to the vertices of the subtree $T[v']$ with respect to the set $X\cup\set{e}$
is one larger than the number of tokens assigned with respect to the set $X$
if and only if $v'$ lies on the path from $v_0$ to the root (inclusive);
otherwise, the numbers of tokens assigned are the same.
Hence, all vertices except for $v_0$ are assigned the same number of tokens with respect to $X$ and with respect to $X\cup\set{e}$, and
the vertex $v_0$ is assigned one additional token.
During the procedure for the token distribution starting with the token assignment with respect to $X\cup\set{e}$,
this additional token is either kept by a vertex on the path from $v_0$ to $v$ or
by one of the bottom vertices of the edges in $P$.
Hence, the set $X\cup\set{e}$ is $\TT$-tamed as desired.
This completes the proof that the collection of $\TT$-tamed sets satisfies the augmentation axiom.
\end{proof}

In the rest of the paper,
if $M$ is a matroid and $\TT$ is a contraction$^*$-decomposition of $M$,
then $M^\TT$ denotes the matroid with the ground set $M\cup E(T)$ and
independent sets being precisely $\TT$-tamed subsets of $M\cup E(T)$.
Note that $M^\TT$ is indeed a matroid by Theorem~\ref{thm:matroid}.
Since for any set $X$ of size larger than $|E(T)|$,
the procedure for the token distribution starting with the token assignment with respect to $X$
results in assigning at least one token to the root of $T$,
every $\TT$-tamed set has size at most $\rank(M)=|E(T)|$.
Recall that the set $E(T)$ is $\TT$-tamed and so it is a basis of $M^\TT$.

We now show that the matroid $M$ is a restriction of the matroid $M^\TT$.

\begin{lemma}
\label{lm:restriction}
Let $M$ be a matroid and $\TT$ a contraction$^*$-decomposition of $M$.
Then the restriction of the matroid $M^\TT$ to the elements of $M$ is the matroid $M$ itself.
\end{lemma}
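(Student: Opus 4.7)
The forward direction is immediate: if $X \subseteq M$ is $\TT$-tamed then $X = X \cap M$ is independent in $M$ by the very definition of tamed. For the converse, I will extend an arbitrary independent set $X \subseteq M$ to a basis $B$ of $M$ and show that $B$ itself is $\TT$-tamed; the conclusion for $X$ then follows by applying \Cref{lm:hereditary} to $X \subseteq B$.

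Showing that a basis $B$ of $M$ is $\TT$-tamed hinges on a counting coincidence. By \Cref{lm:tokens}, the token assignment with respect to $B$ places exactly $|B| = \rank(M) = |E(T)|$ tokens in $T$, which is precisely the number of non-root vertices of $T$. Since each non-root vertex absorbs at most one token during the distribution procedure (it keeps one and forwards the rest upward), the root ends with zero tokens if and only if every non-root vertex absorbs exactly one token, equivalently holds at least one token at the moment it is processed. I plan to prove this last statement by contradiction: suppose some non-root vertex $u$ holds zero tokens when processed, and let $v$ denote the unique $\preccurlyeq$-maximal leaf or internal branching vertex lying in $T[u]$---so $v = u$ when $u$ itself is a leaf or branching vertex, and otherwise $v$ is the first leaf or branching vertex reached by descending through the degree-two chain below $u$. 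Let $j$ denote the number of edges on the path from $v$ to $u$.

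The contradiction comes from sandwiching $\rank_{M/\overline{T(v)}}(T(v))$ between two opposing estimates. Since every vertex strictly between $v$ and $u$ is degree-two and therefore carries no initial tokens, \Cref{lm:tokens-aux} together with $B \subseteq M$ shows that the initial tokens in $T[u]$ sum to $\rank_{M/\overline{T(v)}}(B \cap T(v))$; because $B$ is a basis of $M$ it spans $M$, so this quantity equals $\rank_{M/\overline{T(v)}}(T(v))$, and by \Cref{lm:edges} it is at least $|E(T[v])| + |P_v|$. Conversely, the hypothesis that $u$ holds zero tokens at its processing step means that $u$ neither absorbs nor forwards any token, so the entire initial token mass in $T[u]$ must be absorbed by vertices of $T[u] \setminus \{u\}$, of which there are only $|E(T[u])| = |E(T[v])| + j$. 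Finally, since $u$ lies on the path from $v$ to the $\preccurlyeq$-minimal branching-or-root ancestor of $v$ yet is strictly below that ancestor, $|P_v| \ge j + 1$. Assembling these three estimates yields $|E(T[v])| + j + 1 \le |E(T[v])| + j$, which is absurd.

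The step I expect to require the most care in a formal write-up is handling leaves, internal branching vertices, and the "hidden" degree-two vertices of $T$ within a single uniform argument. Introducing the representative $v$ inside $T[u]$, together with the path length $j$, is the device that makes this possible: when $u$ is itself a leaf or branching vertex one has $j = 0$ and the inequality $|P_v| \ge j + 1$ reduces to the trivial $|P_v| \ge 1$, while when $u$ is degree-two the same inequality encodes the structural fact that $v$'s ancestor path must extend at least one more edge past $u$ before reaching the next branching or root vertex.
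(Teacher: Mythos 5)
Your proof is correct and follows essentially the same route as the paper's: you reduce to showing a basis $B$ is $\TT$-tamed (via \Cref{lm:hereditary}), use \Cref{lm:tokens} to match the total token count to the number of non-root vertices, compute the tokens inside a subtree via \Cref{lm:tokens-aux} (exploiting that $B$, being a basis, gives $\rank_{M/\overline{T(v)}}(B\cap T(v))=\rank_{M/\overline{T(v)}}(T(v))$), and bound that quantity from below by \Cref{lm:edges}. The only difference is presentational — the paper argues directly that every vertex on the path from each leaf/branching vertex $v$ to its ancestor $v'$ must keep a token, while you run the contrapositive, introducing the representative $v$ and path-length $j$ to derive the contradiction $|E(T[v])|+j+1\le|E(T[v])|+j$ — but the counting content is identical.
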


\begin{proof}
Since every $\TT$-tamed subset of the elements of $M$ is independent in $M$,
it is enough to show that every basis of $M$ is an independent set in $M^\TT$.
Let $B$ be a basis of $M$ and consider the token assignment with respect to $B$.
We show that the following holds for every leaf or internal branching vertex $v$:

\begin{center}
\emph{
The number of tokens assigned to the vertices of $T[v]$ is equal to $\rank_{M/\overline{T(v)}}(T(v))$.
}
\end{center}

\noindent Consider a leaf or an internal branching vertex $v$.
Observe that
\[\rank_{M/\overline{T(v)}} \paren*{T(v)} + \rank \paren*{\overline{T(v)}}=
\rank (M)=\rank (B)\le\rank_{M/\overline{T(v)}} (B\cap T(v))+\rank \paren*{\overline{T(v)}}\]
where ranks are calculated in $M$ unless otherwise specified.
It follows that $\rank_{M/\overline{T(v)}} (B\cap T(v))$ is at least $\rank_{M/\overline{T(v)}}(T(v))$ and
so $\rank_{M/\overline{T(v)}} (B\cap T(v))$ is equal to $\rank_{M/\overline{T(v)}}(T(v))$.
The statement above now follows from Lemma~\ref{lm:tokens-aux},
which asserts that
the number of tokens assigned to the vertices of $T[v]$
is $\rank_{M/\overline{T(v)}} (B\cap T(v))$.

We next show that every vertex of the tree $T$ except for the root has kept one token during the token distribution phase.
Let $v$ be a leaf or an internal branching vertex and
$v'$ the $\preccurlyeq$-minimal ancestor of $v$ that is a branching vertex or the root.
Lemma~\ref{lm:edges} yields that the path from $v$ to $v'$
has at most $\rank_{M/\overline{T(v)}}(T(v))-|E(T[v])|$ edges.
Since the vertices of $T[v]$ are assigned exactly $\rank_{M/\overline{T(v)}}(T(v))$ tokens,
the number of tokens of $v$ before it is processed is at least $\rank_{M/\overline{T(v)}}(T(v))-|E(T[v])|$.
It follows that every vertex on the path from $v$ to $v'$ (inclusive of $v$ but exclusive of $v'$) keeps one token.
Since this holds for every choice of the vertex $v$,
we conclude that every vertex of the tree $T$ except for the root has kept one token during the token distribution phase.
However, the number of such vertices is $\rank (M)=\lvert B\rvert$ and
the total number of tokens assigned is also $\lvert B\rvert$ by Lemma~\ref{lm:tokens}.
Hence, the root of the tree $T$ has zero tokens at the end of the token distribution phase and
so the basis $B$ is $\TT$-tamed.
\end{proof}

\subsection{Contraction-depth}
\label{subsec:decomp}

We now show that the matroid $M^\TT$ defined in the previous subsection
can indeed be viewed as the completion of $M$ preserving the contraction$^*$-depth.
We achieve this with the aid of the following three lemmata.

\begin{lemma}
\label{lm:leaf}
Let $M$ be a matroid,
$\TT=(T,f)$ a contraction$^*$-decomposition of $M$, and
$v$ a leaf of $T$.
Let $P$ be the set of edges on the path from $v$ to the root of $T$.
Then every element of $T(v)$ is a loop in the matroid $M^\TT/P$.
\end{lemma}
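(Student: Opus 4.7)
The plan is to show, for each $e\in T(v)$, that $P\cup\{e\}$ is not $\TT$-tamed. Combined with the fact that $P\subseteq E(T)$ is $\TT$-tamed (since $E(T)$ is $\TT$-tamed, being a basis of $M^\TT$, and tamed sets are hereditary by Lemma~\ref{lm:hereditary}), this will yield $\rank_{M^\TT}(P\cup\{e\}) = |P| = \rank_{M^\TT}(P)$, so $e$ is a loop in $M^\TT/P$.

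If $e$ is a loop in $M$, then $(P\cup\{e\})\cap M=\{e\}$ is not independent in $M$, and $P\cup\{e\}$ fails the first clause in the definition of a $\TT$-tamed set. So I would assume $e$ is not a loop and set $X:=P\cup\{e\}$. Then $X\cap M=\{e\}$ is independent, so Lemma~\ref{lm:tokens} gives that the total number of tokens assigned with respect to $X$ equals $|X|=|P|+1$.

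The core of the argument is to show that every token, both in the initial assignment and throughout the distribution, resides on a vertex of $P$. Two facts drive this. First, for any vertex $w$ off $P$ that is a leaf or an internal branching vertex, both contributions to its initial token count vanish: $X\cap T(w)=\emptyset$ because $e\in T(v)$ and $v$ is a descendant of $w$ only when $w$ lies on $P$; and $X\cap P_w=P\cap P_w=\emptyset$ because any edge of $P$ has both endpoints on $P$, while an induction up the chain of degree-$2$ vertices forming $P_w$ shows that if some endpoint along $P_w$ lay on $P$ then $w$ itself would lie on $P$, contradicting the assumption. Second, every descendant of a vertex $w$ off $P$ is itself off $P$ (a descendant on $P$ would make $w$ an ancestor of $v$), so a routine induction on the subtrees rooted off $P$ confirms that no tokens ever propagate out of them.

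Hence all tokens stay on $P$. The path $P$ has $|P|+1$ vertices, of which exactly $|P|$ are non-root, and each non-root vertex retains at most one token during the distribution procedure. Since $|P|+1$ tokens were assigned in total, the root holds at least $|P|+1-|P|=1$ token at the end, so $X$ is not $\TT$-tamed, as required. The main obstacle is the combinatorial bookkeeping at vertices off $P$, especially verifying $P\cap P_w=\emptyset$; once this is established, the counting argument is immediate.
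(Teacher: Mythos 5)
Your proof is correct and follows essentially the same approach as the paper: showing for each $e\in T(v)$ that $P\cup\{e\}$ is not $\TT$-tamed, by observing that Lemma~\ref{lm:tokens} forces $|P|+1$ tokens all onto vertices of $P$, which has only $|P|$ non-root vertices. The only cosmetic differences are that you verify the ``tokens stay on $P$'' claim in more explicit detail, and you dispatch the case of $e$ a loop via the independence clause in the definition of a $\TT$-tamed set rather than by noting that a loop of $M$ remains a loop of $M^\TT$; both are fine.
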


\begin{proof}
Recall that the set $E(T)$ is a basis of $M^\TT$.
Since the set $P$ is independent in $M^\TT$,
it is enough to show that for every $e\in T(v)$,
the set $P\cup\set{e}$ is not independent in the matroid $M^\TT$.
Suppose that $P\cup\set{e}$ is independent for some $e\in T(v)$;
note that $e$ is not a loop in $M$ (otherwise, it would be a loop in $M^\TT$, too).
Observe that with respect to the set $P\cup\set{e}$,
the tokens are assigned only to the vertices on the path $P$.
By Lemma~\ref{lm:tokens}, there are $|P|+1$ tokens assigned, however,
the number of non-root vertices on the path is only $|P|$.
Hence, the set $P\cup\set{e}$ is not $\TT$-tamed and so it is not independent in $M^\TT$.
\end{proof}

\begin{lemma}
\label{lm:root}
Let $M$ be a matroid and $\TT=(T,f)$ a contraction$^*$-decomposition of $M$.
Suppose that the root of $T$ is a branching vertex.
Let $v_1, \dots, v_k$ be all $\preccurlyeq$-maximal descendants of the root that are branching vertices or leaves, and
let $F_i$ be the set of edges contained in $\udt{T}{v_i}$, $i \in [k]$.
Then for each $i \in [k]$ the set $T(v_i)\cup F_i$ is a union of components of $M^\TT$.
\end{lemma}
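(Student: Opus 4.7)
The plan is to apply the converse rank-additivity characterization of components cited from Oxley~\cite[Proposition 4.2.1]{Oxl11} in the preliminaries. Since the root of $T$ is a branching vertex, every vertex strictly between the root and some $v_i$ has degree two (as $v_i$ is the $\preccurlyeq$-maximal branching-or-leaf descendant of the root). Hence the subtrees $T[v_1], \ldots, T[v_k]$ hang from distinct children of the root, the families $\{T(v_i)\}_{i\in[k]}$ and $\{F_i\}_{i\in[k]}$ are pairwise disjoint, and together they partition $M$ and $E(T)$ respectively. So $(T(v_1)\cup F_1), \ldots, (T(v_k)\cup F_k)$ partition the ground set of $M^\TT$, and the lemma reduces, via Oxley's converse, to establishing
\[\sum_{i \in [k]} \rank_{M^\TT}(T(v_i) \cup F_i) \;=\; \rank(M^\TT) \;=\; |E(T)|.\]

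The lower bound $\rank_{M^\TT}(T(v_i) \cup F_i) \geq |F_i|$ is free since $F_i$ is contained in the basis $E(T)$ of $M^\TT$, so it remains to prove the matching per-$i$ upper bound $\rank_{M^\TT}(T(v_i) \cup F_i) \leq |F_i|$. For this, I would fix an arbitrary $\TT$-tamed set $X \subseteq T(v_i) \cup F_i$ and track its token assignment together with the subsequent distribution. For any leaf or internal branching vertex $v'$ lying in $T[v_j]$ with $j \neq i$, the set $X \cap T(v')$ is empty (because $T(v') \subseteq T(v_j)$ is disjoint from $T(v_i)$) and the edges of the associated path $P$ to the $\preccurlyeq$-minimal branching ancestor of $v'$ all lie in $F_j$, which is disjoint from $X$; hence each of $|X \cap P|$, $\rank_{M/\overline{T(v')}}(X \cap T(v'))$, and the subtraction terms appearing in \eqref{eq:leaf} and \eqref{eq:branch} evaluates to zero. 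Consequently all assigned tokens sit on vertices of $T[v_i]$, and because the distribution procedure moves tokens only upward, throughout the procedure they remain inside the upwards closure $\udt{T}{v_i}$.

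The remaining step is an elementary vertex count. Let $p$ denote the length of the path from $v_i$ to the root. The tree $\udt{T}{v_i}$ has $|V(T[v_i])| + p = |E(T[v_i])| + 1 + p$ vertices in total, so removing the root leaves exactly $|E(T[v_i])| + p = |F_i|$ non-root vertices. The distribution procedure leaves at most one token at each non-root vertex at its conclusion, and $\TT$-tamedness forces the root to hold zero tokens, so combining with Lemma~\ref{lm:tokens} yields
\[|X| \;=\; (\text{total tokens assigned}) \;\leq\; |F_i|.\]
This gives $\rank_{M^\TT}(T(v_i) \cup F_i) \leq |F_i|$ and completes the proof.

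The main obstacle I anticipate is ruling out unexpected behavior of the subtraction term in \eqref{eq:branch} on the ``outside'' subtrees $T[v_j]$, $j \neq i$: at first glance it is not obvious that this negative contribution could not leave residual tokens on those subtrees. The resolution is clean, however, because the positive rank term, the subtraction terms, and $|X \cap P|$ all vanish simultaneously whenever $X \cap T(v') = \emptyset$ and $X$ contains no edge of the corresponding path, so both formulas evaluate to zero. Once this confinement is in hand, the remaining counting step is routine.
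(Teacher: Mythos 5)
Your proof is correct and takes essentially the same approach as the paper: establish that the sets $T(v_i)\cup F_i$ partition the ground set, get the lower bound $\rank_{M^\TT}(T(v_i)\cup F_i)\ge|F_i|$ for free from $F_i\subseteq E(T)$, and obtain the matching upper bound by observing that for $X\subseteq T(v_i)\cup F_i$ all tokens are assigned within $T[v_i]$ and hence confined to $\udt{T}{v_i}$ after distribution, then invoke rank-additivity. The only difference is one of granularity: you spell out the verification that formulas~\eqref{eq:leaf} and~\eqref{eq:branch} vanish on the outside subtrees $T[v_j]$, $j\neq i$, and count the non-root vertices of $\udt{T}{v_i}$ explicitly, whereas the paper asserts this confinement in a single sentence.
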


\begin{proof}
Recall that $\rank (M^\TT) = \card{E(T)} = \rank (M)$.
We show that $\rank_{M^\TT}(T(v_i) \cup F_i) = \card{F_i}$ for every $i\in [k]$.
As $F_i \subseteq E(T)$ is independent, we get $\rank_{M^\TT}(T(v_i) \cup F_i) \geq \card{F_i}$.
For any subset $X \subseteq T(v_i) \cup F_i$,
all tokens are assigned to vertices in $T[v_i]$ and
so the procedure for the token distribution starting with the token assignment with respect to $X$
assigns the token to the vertices of the tree $\udt{T}{v_i}$ only.
In particular, if $X \subseteq T(v_i) \cup F_i$ is independent in $M^\TT$,
then the size of $X$ is at most $\card{F_i}$.

Observe that the sets $F_1, \ldots, F_k$ partition the edge set of the tree $T$ and
so it holds that $\card{F_1} + \dots + \card{F_k} = \card{E(T)} = \rank (M^\TT)$.
We conclude that
since the sets $T(v_i)\cup F_i$, $i \in [k]$, partition the ground set of $M^\TT$ and
the sum of their ranks is the rank of the matroid $M^\TT$,
each set $T(v_i)\cup F_i$ is a union of components of $M^\TT$.
\end{proof}

We refer to Figure~\ref{fig:lm:path} for illustration of the notation used in the next lemma.

\begin{figure}
\begin{center}
\epsfbox{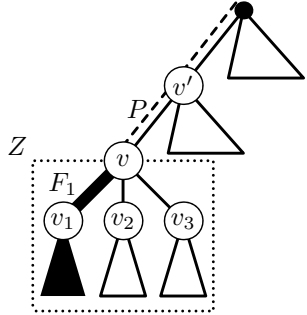}
\end{center}
\caption{Illustration of the notation used in the proof of Lemma~\ref{lm:path}.}
\label{fig:lm:path}
\end{figure}

\begin{lemma}
\label{lm:path}
Let $M$ be a matroid and $\TT=(T, f)$ a contraction$^*$-decomposition of $M$.
Let $v$ be a branching vertex or the root of $T$, and
$P$ the set of edges of $T$ contained on the path from $v$ to the root.
Moreover,
let $v_1,\ldots,v_k$ be the $\preccurlyeq$-maximal descendants of $v$ that are branching vertices or leaves, and
let $F_i$ be the union of the set of edges contained in $T[v_i]$ and the set of edges on the path from $v_i$ to $v$ for every $i\in [k]$.
Then for each $i\in [k]$ the set $T(v_i)\cup F_i$ is a union of components of $M^\TT/P$.
\end{lemma}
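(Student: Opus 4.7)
The plan is to apply the converse of the rank-partition criterion recorded in Section~\ref{sec:prelim}: if a partition of the ground set of a matroid has the sum of ranks equal to the total rank, then each part is a union of components. Write $N = M^\TT / P$ and $E_0 = E(T) \setminus (P \cup F_1 \cup \cdots \cup F_k)$; since $\bigcup_i F_i = E(T[v])$, the set $E_0$ consists of exactly the edges of $T$ lying neither in $T[v]$ nor on the path from $v$ to the root. I would partition the ground set of $N$ into $G_1, \ldots, G_k$, where $G_i = T(v_i) \cup F_i$, together with a residual block $G_0 = (M \setminus T(v)) \cup E_0$; the $\preccurlyeq$-maximality of the $v_i$ makes the $T(v_i)$ pairwise disjoint and the $F_i$ partition $E(T[v])$, so this really is a partition of the ground set of $N$. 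The lemma will then follow once one establishes $\rank_N(G_i) = |F_i|$ for every $i \in [k]$ and $\rank_N(G_0) = |E_0|$, since these values add up to $|E(T)| - |P| = \rank(N)$.

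The lower bounds $\rank_N(G_i) \geq |F_i|$ and $\rank_N(G_0) \geq |E_0|$ are immediate, because $F_i$ and $E_0$ are subsets of the basis $E(T)$ of $M^\TT$ disjoint from $P$. For the matching upper bounds the key tool is Lemma~\ref{lm:leaf}: every element $e \in T(\ell)$ is a loop in $M^\TT / P_\ell$, where $P_\ell$ denotes the edge set of the path from $\ell$ to the root of $T$. For $e \in T(v_i)$, say with $f(e) = \ell$, the path from $\ell$ to the root of $T$ splits as the path from $\ell$ to $v$ (whose edges are contained in $F_i$) concatenated with $P$, so $P_\ell \subseteq F_i \cup P$; Lemma~\ref{lm:leaf} then gives that $F_i \cup P \cup \{e\}$ is dependent in $M^\TT$, equivalently, that $e$ lies in the span of $F_i$ in $N$. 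For $e \in M \setminus T(v)$, I would let $w$ be the least common ancestor in $T$ of $f(e)$ and $v$: since $f(e) \notin T[v]$, the vertex $w$ sits strictly above $v$, the path from $f(e)$ to $w$ avoids $T[v]$ and the path from $v$ to the root (so its edges lie in $E_0$), while the path from $w$ to the root is a subpath of $P$. Therefore $P_\ell \subseteq E_0 \cup P$, and Lemma~\ref{lm:leaf} yields that $e$ lies in the span of $E_0$ in $N$.

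The main obstacle is conceptual rather than calculational: choosing the right partition, and in particular recognising that the edges of $T$ outside $T[v] \cup P$ must be grouped with the elements of $M$ mapped outside $T[v]$ into a single residual block $G_0$, so that the rank identity comes out cleanly. Once this partition is in place, the path-structure inside $T$ makes the inclusion of $P_\ell$ in either $F_i \cup P$ or $E_0 \cup P$ transparent, Lemma~\ref{lm:leaf} converts each inclusion into the desired span statement, and the rank-partition criterion closes out the argument.
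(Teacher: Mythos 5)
Your proposal is correct, and it takes a genuinely different and more direct route than the paper's proof. The paper proceeds by induction on the distance of $v$ from the root: it first establishes (by a three-way case analysis combined with the inductive hypothesis and \Cref{lm:root}) that $Z = T(v)\cup E(T[v])$ is a union of components of $M^\TT/P$, uses this to justify passing from $M^\TT/P$ to $M^\TT/F_0$ (where $F_0$ is the set of all edges outside $T[v]$) without changing the restriction to $Z$, derives the rank equalities $\rank_{M^\TT/F_0}(T(v_i)\cup F_i)=\card{F_i}$ via the token-counting machinery of \Cref{lm:tokens-aux}, and finally invokes the partition criterion in $M^\TT/F_0$. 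You instead apply the partition criterion directly in $N=M^\TT/P$, with the partition $G_0,G_1,\dots,G_k$ where the residual block $G_0=(M\setminus T(v))\cup E_0$ absorbs everything outside $T[v]\cup P$, and you obtain all the needed rank upper bounds from \Cref{lm:leaf} alone by tracing the root-path of each leaf through the tree: for $e\in T(v_i)$ the path from $f(e)$ to the root decomposes into a part inside $F_i$ and a part inside $P$, while for $e\in M\setminus T(v)$ the least-common-ancestor argument shows it decomposes into a part inside $E_0$ and a part inside $P$. This eliminates both the induction and the intermediate token-counting step, at the cost of introducing the auxiliary block $G_0$ and the LCA analysis; the resulting proof is shorter and self-contained. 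One small point worth stating explicitly when writing this up: when $v$ is the root, $G_0=\emptyset$ and the partition degenerates to $G_1,\dots,G_k$, which is harmless but should be noted so the application of the partition criterion is clean.
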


\begin{proof}
The proof proceeds by induction on the distance of the vertex $v$ from the root.
Suppose that we are in the setting of the lemma and
the statement holds for all possible choices of $v$ larger in $\preccurlyeq$.
Note that $T(v)=T(v_1)\cup\cdots\cup T(v_k)$ and $E(T[v])=F_1\cup\cdots\cup F_k$.
Further, let $Z$ be the union $T(v)\cup E(T[v])$.

We first argue that the following claim holds:
\emph{the set $Z$ is a union of components of the matroid $M^\TT/P$}.
Let $v'$ be the $\preccurlyeq$-minimal ancestor of $v$ that is a branching vertex;
if $v$ has no ancestor that is a branching vertex, then $v'$ is the root.
In particular, if $v$ itself is the root, then $v'$ is the root (and this is the only case when $v=v'$).
We next distinguish three cases based on
whether $v'$ is the root or not and
whether $v'$ is a branching vertex or not.
\begin{itemize}
\item If $v'$ is the root and it is not a branching vertex,
      then the set $Z$ is the set of all elements of $M^\TT$ except for those contained in $P$ (note that $P$ can be the empty set).
      Hence, the set $Z$ contains all elements of the matroid $M^\TT/P$ and the claim holds.
\item If $v'$ is the root and it is a branching vertex,
      then $Z\cup P$ is a union of components of the matroid $M^\TT$ by Lemma~\ref{lm:root}.
      Hence, the set $Z$ is a union of components of $M^\TT/P$.
\item If $v'$ is a branching vertex but not the root, we can apply induction to $v'$ as $v'\not=v$.
      Let $P'$ be the set of edges on the path from $v'$ to the root.
      The induction yields that $Z\cup (P\setminus P')$
      is a union of components of the matroid $M^\TT/P'$.
      Hence, $Z$ is a union of components of the matroid $M^\TT/P$.
\end{itemize}
The claim is now proven.

Let $F_0$ be the set of all edges of $T$ not contained in $T[v]$.
Note that $P$ is a subset of $F_0$, and if $v$ is the root, then $P=F_0=\emptyset$.
Since $Z$ is a union of components of the matroid $M^\TT/P$ and
so contracting elements not contained in $Z$ does not affect the restriction of $M^\TT/P$ to $Z$,
the restriction of the matroid $M^\TT/P$ to $Z$ and
the restriction of the matroid $M^\TT/F_0$ to $Z$ are the same.
Hence, it is enough to prove that
each $T(v_i)\cup F_i$, $i\in [k]$, is a union of components of $M^\TT/F_0$.

Consider $i\in [k]$.
Since the set $F_i$ is independent in $M^\TT/F_0$ (as the set $F_0\cup F_i$ is independent in $M^\TT$),
the rank of $T(v_i)\cup F_i$ in the matroid $M^\TT/F_0$ is at least $\lvert F_i\rvert$.
On the other hand, if $X$ is a subset of $T(v_i)\cup F_i$,
the token assignment with respect to $X\cup F_i$ assigns no tokens to a vertex of any tree $T[v_j]$, $j\in [k]\setminus\{i\}$.
Hence, if $X\cup F_0$ is independent in $M^\TT$, i.e., $\TT$-tamed,
its size is at most $\lvert F_i\rvert+\lvert F_0\rvert$.
It follows that the size of any independent subset $T(v_i)\cup F_i$ in $M^\TT/F_0$ is at most $\lvert F_i\rvert$.
We conclude that
\[\rank_{M^\TT/F_0} (T(v_i)\cup F_i)=\lvert F_i\rvert\]
for every $i\in [k]$.
Observe that any element of $\overline{T(v)}$ is a loop in the matroid $M^\TT/F_0$ by \Cref{lm:leaf}.
Since the sets $T(v_1)\cup F_1, \dots, T(v_k) \cup F_k$ partition the set of $T(v)\cup E(T[v])$,
which contains all non-loop elements of the matroid $M^\TT/F_0$, and
the sum over $i \in [k]$ of the ranks of the sets $T(v_i)\cup F_i$ is equal to the rank of $M^\TT/F_0$,
each set $T(v_i)\cup F_i$ is a union of components of $M^\TT/F_0$.
\end{proof}

We are now ready to prove that
the contraction-depth of the matroid $M^\TT$
is at most the height of the contraction$^*$-decomposition $\TT$ of a matroid $M$.

\begin{theorem}
\label{thm:depth}
Let $M$ be a matroid and $\TT = (T, f)$ a contraction$^*$-decomposition of $M$.
Then the contraction-depth of the matroid $M^\TT$ is at most the height of the tree $T$.
\end{theorem}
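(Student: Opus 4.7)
The plan is to induct on $|V(T)|$. The base case $|V(T)|=1$ is immediate: then $E(T)=\emptyset$ forces $\rank(M)=0$, so $M$ consists only of loops, $M^\TT=M$, and $\cd(M^\TT)\le 1$, matching the height of $T$.

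For the inductive step let $h$ denote the height of $T$, and case-split on the structure of the root. \emph{Case 1: the root of $T$ is a branching vertex.} Let $v_1,\dots,v_k$ (with $k\ge 2$) be its $\preccurlyeq$-maximal descendants that are branching vertices or leaves, and set $F_i=E(\udt{T}{v_i})$, $M_i=M|T(v_i)$, and $\TT_i=(\udt{T}{v_i},f|_{T(v_i)})$. \Cref{lm:rank} ensures that each $T(v_i)$ is a union of components of $M$ with $\rank(M_i)=|E(\udt{T}{v_i})|$, so $\TT_i$ is a valid contraction$^*$-decomposition of $M_i$. A direct comparison of token assignments---vertices of $T$ lying outside $\udt{T}{v_i}$ receive no tokens from any $X\subseteq T(v_i)\cup F_i$, and tokens inside $\udt{T}{v_i}$ are computed identically in the two trees---shows $M^\TT|(T(v_i)\cup F_i)=M_i^{\TT_i}$; combined with \Cref{lm:root}, this expresses each component of $M^\TT$ as a component of some $M_i^{\TT_i}$. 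Since $k\ge 2$, $|V(\udt{T}{v_i})|<|V(T)|$, and induction yields $\cd(M_i^{\TT_i})\le h$, whence $\cd(M^\TT)\le h$.

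\emph{Case 2: the root has a unique child $c$.} Let $e^*$ be the edge from the root to $c$. The key subclaim is $M^\TT/e^*=M_c^{\TT_c}$, where $\TT_c=(T[c],f)$ is rooted at $c$ and $M_c:=(M^\TT/e^*)|M$ is a matroid on the ground set $M$ of rank $\rank(M)-1=|E(T[c])|$. The rank condition for $\TT_c$ reduces to checking $|E(T[c]\langle f(X)\rangle)|\ge\rank_{M_c}(X)=\rank_{M^\TT}(X\cup\set{e^*})-1$, which follows from the general tokens-fit-in-the-active-subtree bound $\rank_{M^\TT}(X\cup\set{e^*})\le|E(T\langle f(X)\rangle)|$. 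The identification $M^\TT/e^*=M_c^{\TT_c}$ is then proved by matching token assignments and distributions: the single unit $e^*$ contributes at $c$ (or threads down to the first leaf/branching descendant of $c$, when $c$ is degree-two) under the $T$-distribution is precisely what balances the rank drop between $M$ and $M_c$ in the $T[c]$-distribution, so $X$ is $\TT_c$-tamed for $M_c$ iff $X\cup\set{e^*}$ is $\TT$-tamed for $M$. Since $|V(T[c])|<|V(T)|$ and its height is $h-1$, induction gives $\cd(M^\TT/e^*)\le h-1$. If $M^\TT$ is connected, then $\cd(M^\TT)\le 1+\cd(M^\TT/e^*)\le h$; otherwise the component of $M^\TT$ containing $e^*$ satisfies the same bound, while any component disjoint from $e^*$ is unaffected by contracting $e^*$ and already has depth at most $h-1$.

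The main obstacle will be proving the identification $M^\TT/e^*=M_c^{\TT_c}$ in Case 2. The rank computation is routine, but matching independent sets on the two sides requires a delicate simultaneous comparison of token assignments and distributions across two different trees and two different underlying matroids; in particular, one must handle the local structure at $c$ (leaf versus branching versus degree-two) separately to see that the token contributed by $e^*$ is absorbed correctly by the distribution procedure.
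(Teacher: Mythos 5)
Your strategy differs from the paper's. The paper proves a stronger claim by bottom-up induction within the fixed matroid $M^\TT$: for every leaf, branching vertex, or root $v$, with $P$ the edge set of the $v$--root path, the contraction-depth of $(M^\TT/P)|(T(v)\cup E(T[v]))$ is at most the height of $T[v]$, using \Cref{lm:leaf} for the base case and \Cref{lm:path} to split off components at branching vertices; the underlying matroid never changes, only edge-paths are contracted. You instead induct on $\card{V(T)}$. Your Case~1 is sound: since each $T(v_i)$ is a union of components of $M$ (\Cref{lm:rank}), the ranks appearing in the two token assignments genuinely coincide, and \Cref{lm:root} gives the component split of $M^\TT$, so the reduction to the matroids $M_i^{\TT_i}$ on the subtrees $\udt{T}{v_i}$ goes through.

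The gap is in Case~2, exactly where you flag it. The identity $M^\TT/e^*=M_c^{\TT_c}$ (with $M_c:=(M^\TT/e^*)|M$) is the entire content of the inductive step, and you do not prove it. It is not routine: the token assignments in $T$ and in $T[c]$ are computed against two different rank functions ($\rank_M$ versus $\rank_{M_c}$, which differ by $0$ or $1$ on a set $A\subseteq M$ according to whether $e^*$ lies in the $M^\TT$-span of $A$), and the formula is evaluated at different sets of vertices ($c$ receives tokens in $T$ when it is a branching vertex, but it is the root of $T[c]$ and receives none there). You gesture at the extra $e^*$-token balancing the rank drop, but one must show this cancellation survives the distribution procedure for every set $X$ and every local configuration at $c$. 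Moreover $M_c$ is itself defined via $M^\TT$, so the claim is an identity \emph{about} $M^\TT$ rather than an independent bottom-up recursion on tamed matroids; establishing it looks at least as involved as the paper's \Cref{lm:path}, which is precisely the lemma your argument would replace. Until that identity is proved, the induction does not close.
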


\begin{proof}
We prove the following claim:
\emph{Let $v$ be a leaf, a branching vertex or the root of $T$, and
      let $P$ be the set of edges on the path from $v$ to the root.
      The contraction-depth of the matroid $M^\TT/P$ restricted to $T(v)\cup E(T[v])$
      is at most the height of the tree $T[v]$.}
Once proven, the claim implies the statement of the theorem by choosing the vertex $v$ to be the root of $T$.

We prove the claim using bottom-up induction on the tree $T$ starting with the leaves.
Let $v$ be a leaf and let $P$ be the set of edges on the path from $v$ to the root.
By Lemma~\ref{lm:leaf}, every element of $T(v)$ is a loop in $M^\TT/P$.
Hence, the contraction-depth of the matroid $M^\TT/P$ restricted to $T(v)$ is one,
i.e., the height of $T[v]$.

\begin{figure}
\begin{center}
\epsfbox{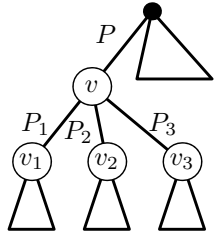}
\end{center}
\caption{Illustration of the notation used in the proof of Theorem~\ref{thm:depth}.}
\label{fig:thm:depth}
\end{figure}

Let $v$ be a branching vertex of the tree $T$ or the root of $T$.
Let $P$ be the set of edges on the path from $v$ to the root.
Moreover,
let $v_1,\dots,v_k$ be the $\preccurlyeq$-maximal descendants of $v$ that are branching vertices or leaves, and
let $P_i$ be the set of edges on the path from $v_i$ to $v$ for each $i\in [k]$.
By Lemma~\ref{lm:path}, each set $T(v_i)\cup E(T[v_i]) \cup P_i$ is a union of components in $M^\TT/P$.
Hence, the contraction-depth of $M^\TT/P$ restricted to $T(v)\cup E(T[v])$
is at most the maximum over $i \in [k]$ of the contraction-depth of $M^\TT/P$ restricted to $T(v_i)\cup E(T[v_i])\cup P_i$.
Consider $i\in [k]$.
The contraction-depth of $M^\TT/P$ restricted to $T(v_i)\cup E(T[v_i])\cup P_i$
is at most the contraction-depth of $M^\TT/(P\cup P_i)$ restricted to $T(v_i)\cup E(T[v_i])$
increased by $|P_i|$ (the elements of $P_i$ can be those that are first contracted in the definition of the contraction-depth).
By induction,
the contraction-depth of $M^\TT/(P\cup P_i)$ restricted to $T(v_i)\cup E(T[v_i])$
is at most the height of $T[v_i]$ and so
the contraction-depth of $M^\TT/P$ restricted to $T(v_i)\cup E(T[v_i])\cup P_i$
is at most the height of $T[v_i]$ increased by $\lvert P_i\rvert$,
which is at most the height of $T[v]$.
It follows that
the contraction-depth of $M^\TT/P$ restricted to $T(v)\cup E(T[v])$
is at most the height of $T[v]$ as desired.
\end{proof}

We are now ready to prove the main result of this paper.

\begin{proof}[Proof of Theorem~\ref{thm:main}.]
Suppose that the matroid $M$ consists of loops and coloops only.
If $M$ is empty, then $\csd(M) = \cd(M) = 0$.
If $M$ has at least one coloop,
then the contraction$^*$-depth $\csd(M)$ of $M$ is equal to $1$, and
if $M$ has loops only,
then the contraction$^*$-depth $\csd(M)$ of $M$ is equal to $0$.
Note that if $M$ is not empty, then
the contraction-depth of any matroid, in particular, any matroid containing $M$ as a restriction, is at least one, and
the contraction-depth of the matroid $M$ itself is equal to $1$.
Hence, the equality in the statement of the theorem holds if $M$ consists of loops only and
fails if $M$ has at least one coloop or is empty.
This establishes the theorem when $M$ has coloops and loops only.

In the rest of the proof, we assume that the matroid $M$ contains an element that is neither a loop nor a coloop.
Consider a matroid $M'$ that contains $M$ as a restriction.
Observe that $M'$ does not contain coloops and loops only.
Hence, \Cref{thm:upper} yields that the contraction-depth of such a matroid $M'$ is at least $\csd(M')+1$.
Since the contraction$^*$-depth is minor-monotone~\cite{KarKLM17},
we obtain that $\csd(M') \geq \csd(M)$.
Thus the contraction-depth of the matroid $M'$ is at least $\csd(M)+1$.
We have proven that $\csd(M)\le\cd(M')-1$ for any $M'\sqsupseteq M$.

Let $\TT$ be a contraction$^*$-decomposition of $M$ with depth $\csd(M)$.
The matroid $M^\TT$, which contains $M$ as a restriction by Lemma~\ref{lm:restriction},
has contraction-depth at most $\csd(M)+1$ by Theorem~\ref{thm:depth},
i.e.~$\cd(M^\TT)\le\csd(M)+1$.
Hence, the matroid $M^\TT$ is a matroid $M'\sqsupseteq M$ such that $\cd(M')-1\le\csd(M)$ (in fact,
the equality holds since $\csd(M)\le\cd(M')-1$ for any such matroid $M'$).
The statement of the theorem now follows.
\end{proof}

\section{Conclusion}
\label{sec:concl}

It is interesting to note that
matroids formed by loops and coloops only,
which form an exception in \Cref{thm:main},
also appear as exceptional cases in regard to other matroid width parameters.
Most prominently,
the branch-width of a graph $G$ and the branch-width of the cycle matroid of $G$
are equal unless $G$ is forest~\cite{HicM07,MazT07},
in which case the cycle matroid of $G$ consists of coloops only.
We remark that it is possible to modify the definition of contraction-depth in a way that 
the exceptional case in \Cref{thm:main} disappears.
To modify \Cref{thm:main},
consider the \emph{altered contraction-depth} of a matroid $M$, denoted by $\cd'(M)$,
which is defined recursively as follows:
\begin{itemize}
\item If $M$ has one element, then $\cd'(M)=\rank(M)$.
\item If $M$ is not connected, then $\cd'(M)$ is the maximum altered contraction-depth of a component of $M$.
\item Otherwise $\cd'(M)=1+\min\limits_{e\in M}\cd'(M/e)$, 
      i.e., $\cd'(M)$ is one plus the minimum altered contraction-depth of $M/e$
      where the minimum is taken over all elements $e$ of $M$.
\end{itemize}

Following the proof of \Cref{thm:main},
using altered contraction-depth instead of contraction-depth whenever it appears,
one can derive the next theorem.
\begin{theorem}
\label{thm:main2}
Let $M$ be a matroid.
Then the contraction$^*$-depth of $M$ is equal to the minimum altered contraction-depth
of a matroid $M'$ that contains $M$ as a restriction,
i.e.,
\[\csd(M)=\min_{M'\sqsupseteq M}\cd'(M').\]
\end{theorem}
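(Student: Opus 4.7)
The plan is to follow the structure of the proof of \Cref{thm:main}, replacing contraction-depth by altered contraction-depth throughout. Two ingredients need adaptation: an \emph{unconditional} analogue of \Cref{thm:upper} giving $\csd(M)\le\cd'(M)$ for every matroid $M$, and an analogue of \Cref{thm:depth} giving $\cd'(M^\TT)\le$ depth of $T$ (rather than height of $T$). Once both are established, \Cref{lm:restriction}, minor-monotonicity of $\csd$, and optimal choice of $\TT$ combine to give the equality exactly as in the proof of \Cref{thm:main}.

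For the lower bound $\csd(M)\le\cd'(M)$, I would proceed by induction on $\card{M}$, mirroring the case analysis of \Cref{thm:upper}. The previously troublesome base cases now resolve cleanly because of the definitional change: for a rank-$0$ matroid both sides equal $0$; for a single coloop both equal $1$; and for a rank-$1$ connected matroid consisting of parallel elements (plus possibly loops), $\cd'$ unfolds as $1+\cd'(\text{loops})=1$, matching $\csd$. The disconnected and connected rank-$\geq 2$ inductive steps carry over verbatim, using $\csd(M)=\max_i\csd(M_i)$ over components and $\csd(M)\le 1+\csd(M/e)$ for connected matroids (both established already in the proof of \Cref{thm:upper}). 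Combined with minor-monotonicity of $\csd$, this yields $\csd(M)\le\csd(M')\le\cd'(M')$ for every $M'\sqsupseteq M$, hence $\csd(M)\le\min_{M'\sqsupseteq M}\cd'(M')$.

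For the upper bound, I would prove the following analogue of \Cref{thm:depth}: for every contraction$^*$-decomposition $\TT=(T,f)$ of $M$, it holds that $\cd'(M^\TT)\le$ depth of $T$. The bottom-up inductive claim will be: for every vertex $v$ that is a leaf, an internal branching vertex, or the root, and with $P$ the set of edges on the path from $v$ to the root, the altered contraction-depth of $M^\TT/P$ restricted to $T(v)\cup E(T[v])$ is at most the depth of $T[v]$. The base case is where the shift from height to depth enters: by \Cref{lm:leaf}, the restriction is a loops-only matroid, which has altered contraction-depth $0=\text{depth}(T[v])$ for a leaf $v$. The inductive step is essentially identical to that of \Cref{thm:depth}: \Cref{lm:path} reduces the problem to the components $T(v_i)\cup E(T[v_i])\cup P_i$, each bounded by $\cd'(M^\TT/(P\cup P_i)$ restricted to $T(v_i)\cup E(T[v_i]))+\card{P_i}\le\text{depth}(T[v_i])+\card{P_i}\le\text{depth}(T[v])$, using the unconditional inequality $\cd'(N)\le\cd'(N/e)+1$ (which reduces, via components containing $e$, to the connected case where it is immediate from the recursive definition). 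Applying the claim to the root of a depth-optimal $\TT$ gives $\cd'(M^\TT)\le\csd(M)$, as required.

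The main obstacle is not conceptual but bookkeeping: one must carefully verify that changing the base case of the recursive definition (single loops contributing $0$ instead of $1$) propagates correctly through the bottom-up induction so that the final bound becomes depth rather than height, with no off-by-one slippage at branching vertices. The one slightly subtle check is the unconditional form of $\cd'(N)\le\cd'(N/e)+1$ when $N$ is disconnected or contains isolated loops and coloops; routing through the component containing $e$ handles this in a line, and after that the rest of the proof is essentially a transcription of the proofs of \Cref{thm:upper} and \Cref{thm:depth}.
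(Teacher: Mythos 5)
Your proposal is correct and follows the same route the paper intends (the paper only sketches this proof as "follow the proof of Theorem~\ref{thm:main} with altered contraction-depth substituted"): you correctly identify the two places where adaptation is needed, namely an unconditional analogue of Theorem~\ref{thm:upper} (which now closes cleanly because the troublesome base cases $\csd=\cd'$ for loops, single coloops, and rank-one matroids all match) and an analogue of Theorem~\ref{thm:depth} with height replaced by depth (the shift entering only at the leaf base case, where a loops-only restriction now has altered contraction-depth $0$), together with the routine unconditional inequality $\cd'(N)\le\cd'(N/e)+1$. The only tiny slip is calling a rank-one matroid with parallel elements "plus possibly loops" connected — loops form their own components — but this is cosmetic and the disconnected branch of the induction covers it anyway.
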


From the broader point of view of matroid theory,
it is desirable that contraction-depth and deletion-depth are dual notions.
When one attempts to define a notion dual to altered contraction-depth,
which we dub here altered deletion-depth and denote by $\dd'(M)$,
it would necessarily be defined as follows:
\begin{itemize}
\item If $M$ consists of a single loop, then $\dd'(M)=1$.
\item If $M$ consists of a single coloop, then $\dd'(M)=0$.
\item If $M$ is not connected, then $\dd'(M)$ is the maximum altered deletion-depth of a component of $M$.
\item Otherwise $\dd'(M)=1+\min\limits_{e\in M}\dd'(M/e)$, 
      i.e., $\dd'(M)$ is one plus the minimum altered deletion-depth of $M\setminus e$
      where the minimum is taken over all elements $e$ of $M$.
\end{itemize}
However, it seems very unnatural that
the altered deletion-depth of a matroid consisting of a single loop should be larger than
that of a matroid consisting of a single coloop.

\section*{Acknowledgement}

The authors would like to thank both anonymous reviewers for their careful reading of this manuscript and many detailed comments,
which helped to improve the presentation of the results.

\bibliographystyle{bibstyle}
\bibliography{cdepth}

\end{document}